\newcommand{\nfrac}[2]{\displaystyle{#1\over#2}}
\newcounter{cntr}
\newcommand{\alcghs}{\mbox{$\alpha^3/gh^2$}}
\def\pmod#1{\enspace\bigl({\rm mod}~#1\bigr)}
\newcommand{\io}[1]{{\cal O}_{ \scriptstyle#1}}
\DeclarePairedDelimiter\abs{\lvert}{\rvert}
\def\emp#1/{{\sl #1 \/}}
\def\Z{\ensuremath{\mathbb{Z}}}
\def\Q{\ensuremath{\mathbb{Q}}}
\def\N{\ensuremath{\mathbb{N}}}
\newcommand{\C}{\ensuremath{\mathbb{C}}}
\def\rat#1. #2.{\displaystyle{{#1}\over{#2}}}
\def\malign#1. #2/{\halign{&\hfill##\hfill\cr%
		#1\cr%
		#2\cr}}%
\def\bib #1. #2. #3. #4. #5\par{%
	\quad{\rm #1 \/}{\it #2, \/}%
	{\rm #3, \/}{\bf #4}{\ #5.}}%
\DeclareMathOperator{\disc}{disc}
\begin{document}

\title{Integral Basis for Quartic Kummer Extensions Over \(\mathbb{Q}[i]\)
}


\author{Dr. S. Venkataraman         \and  Prof. Manisha V. Kulkarni 
}


\institute{  S. Venkataraaman \at
	Associate Professor in Mathematics\\
	School of Sciences \\
	Indira Gandhi National Open University\\
	Maidangarhi\\
	Delhi 110068
	\email{svenkat@ignou.ac.in
	}
           \and
           Prof. Manisha V. Kulkarni \at
           Professor in Mathematics\\
           International Institute of Information Technology\\
           26/C, Phase-1, Electronics City\\
           Hosur Road\\
           560100\\
           \email{manisha.kulkarni@iiitb.ac.in
         }  
}

\date{Received: date / Accepted: date}

\maketitle

\begin{abstract}
Let \(K=\Q[\iota]\) and \(N=K[\sqrt[4]{\alpha}]\), \(\alpha\in\Z[\iota]\), \(\alpha=fg^2h^3\), \(f\), \(g\), \(h\in\Z[\iota]\) are pairwise coprime and square free. Let \(\io{N}\) be the ring of integers of \(N\). 
In this article we construct normalised integral basis for \(\io{N}\) over \(\Z[\iota]\), that is an integral basis of the form 
\[
\left\{1,\frac{f_1(\alpha)}{d_1},\frac{f_2(\alpha)}{d_2},\frac{f_{3}(\alpha)}{d_3}\right\}
\]
where \(d_i\in\Z[i]\) and \(f_i(X)\), \(1\leq i\leq 3\) are monic polynomials of degree \(i\) over \(\Z[\iota]\).  We explicitly determine what \(d_i\), \(1\leq i\leq n-1\) are in terms of \(f\), \(g\) and  \(h\).
\keywords{Ring of integers\and Integral basis \and
	Algebraic number theory}
\subclass{11R04}
\end{abstract}

\section{Introduction}\label{intro}
Let \(L/F\) be an extension of number fields and let \(\io{F}\) and \(\io{L}\) be the ring of integers of \(L\) and \(F\), respectively.  Then, \(\io{L}\) has a natural structure of a module over \(\io{F}\). When \(F=\Q\), since \(\io{\Q}=\Z\) is a PID, \(\io{L}\) is free over \(\Z\). So, \(\io{L}\) has a  basis over \(\Z\) that we call the integral basis for \(\io{L}\).  Dedekind gave an integral basis for cubic fields in \cite{dedekind1900}.  Berwick computed integral bases for many extensions of \(\Q\) in \cite{berwick1927}. There has been a lot of work on topic and the literature is too vast to give an exhaustive list.  

When \(F\neq \Q\)  the module  \(\io{L}\) may not be free over \(\io{F}\).  H. B. Mann has discussed the question of existence of integral basis in \cite{hbmann}. In this paper, we discuss the case where \(F=\Q[\iota]\) and \(L=\Q\left(\sqrt[4]{\alpha}\right)\) where \(\alpha\in \Q[i]\) and is not a fourth power.  Hymo and Parry, in \cite{hp92} have already calculated the integral basis when $\alpha\in\Q$. Funakura \cite{funakura} has given integral basis when 
\(F=\Q\) and \(L=\Q\left(\sqrt[4]{k}\right)\), \(k\in \Z\). In \cite{jhakar2021}, the case \(F=\Q\), \(L=F\left(\sqrt[n]{a}\right)\) is dealt with, where \(a\in \Z\), \(n\in \N\) is such that, for each prime \(p\) dividing \(n\), either \(p\nmid a\) or  the exponent of \(p\) in \(a\) is coprime to \(p\).

In Section~\ref{section-2}, we recall some of the results that we need in proving the main result.  In Section~\ref{section-3}, we prove the main result of our paper.

\section{Preliminaries}\label{section-2}
Throughout this paper, let \(f\), \(g\), \(h\) be pairwise coprime, square free integers in \(\Z[\iota]\), \(m=fg^2h^3\) and \(\alpha=\sqrt[4]{fg^2h^3}\). Let \(K=Q[\iota]\), \(M=\Q\left(\sqrt{fh}\right)\) and \(N=K[\alpha]\). In this paper, we determine a normalised integral basis for \(N/K\). (See definition below.)

Let $\io{F}$ be a P.I.D with quotient field $F$, where $F$ is a finite
extension of $\Q$. Let $L$ be an extension of degree $n$ over $F$ and $\io{L}$, the
integral closure of $\io{F}$ in $L$. Let $\alpha\in \io{L}$ be such that
$F(\alpha)=L$. We have the following result.
\begin{theorem}[Normalised Integral Basis]\label{main-theorem}
	There exist $d_{1},d_{2},\ldots,d_{n}\in\io{F}$ and monic polynomials
	$f_{i}(X)\in \io{F}[X]$, $1\leq i\leq n-1$, $deg(f_{i}(X))=i$, such that 
	\begin{equation}\left\{ 1,\frac{f_{1}(\alpha)}{d_{1}},
		\frac{f_{2}(\alpha)}{d_{2}}\cdots,
		\frac{f_{n-1}(\alpha)}{d_{n-1}} \right\}\label{NIM}\end{equation} is a basis for $\io{L}$ over $\io{F}$. Further,
	$d_{i}$'s satisfy the following conditions:
	\begin{enumerate}[(1)]
		\item Each $f_{i}$ can be replaced by any monic polynomial $g\in\io{K}[X]$ of the same
		degree such that $\displaystyle{g(\alpha)\over{d_{i}}}
		\in S$.\label{basis3}
		\item If $\displaystyle{g(\alpha)\over q}\in \io{L}$ for some monic polynomial
		$g(X)\in \io{K}[X]$ of degree $i$ and $q\in \io{K}$, $q\mid d_{i}$.\label{basis5}
		\item $d_{i}d_{j}\mid d_{i+j}$ if $i+j< n$.\label{basis1}
		\item $(d_{1}d_{2}\ldots d_{n-1})^{2}\disc(L/F)=\disc(\alpha)$.\label{basis2}
		\item $d_{1}^{n(n-1)}\mid \disc(\alpha)$.\label{basis4}
	\end{enumerate}
\end{theorem}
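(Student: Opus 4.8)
The plan is to build the basis greedily, degree by degree, starting from the power basis $\{1,\alpha,\ldots,\alpha^{n-1}\}$, which lies in $\io{L}$ since $\alpha\in\io{L}$. For each $i$ with $1\le i\le n-1$, I would consider the set of all elements of $\io{L}$ expressible as $\frac{g(\alpha)}{q}$ with $g\in\io{F}[X]$ monic of degree $i$ and $q\in\io{F}$; equivalently, for a fixed monic $g_i$ of degree $i$, the set $D_i=\{q\in\io{F}: \frac{g_i(\alpha)+(\text{lower-degree }\io{F}\text{-combination})}{q}\in\io{L}\}$. The first key point is that the denominators that can appear are bounded: since $\disc(\alpha)\ne 0$, clearing denominators against the known index formula $[\io{L}:\io{F}[\alpha]]^2\disc(L/F)=\disc(\alpha)$ shows that any such $q$ divides a fixed nonzero element of $\io{F}$. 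Because $\io{F}$ is a PID (hence a UFD with finitely many divisor classes of any fixed element, up to units), among all admissible $(g,q)$ one can choose $f_i$ monic of degree exactly $i$ and $d_i$ of \emph{maximal} divisibility, i.e.\ $d_i$ is a gcd-type maximal denominator; property~\ref{basis5} is then immediate from this maximality once we check the set of admissible $q$ is closed under lcm.

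The second step is to verify that $\left\{1,\frac{f_1(\alpha)}{d_1},\ldots,\frac{f_{n-1}(\alpha)}{d_{n-1}}\right\}$ is actually an $\io{F}$-basis of $\io{L}$. Linear independence over $F$ is clear since the $i$-th element has $\alpha$-degree exactly $i$. For the spanning claim, take $\theta\in\io{L}$ and write $\theta=\sum_{i=0}^{n-1}c_i\alpha^i$ with $c_i\in F$; I would induct downward on the top nonzero degree $k$. Writing $\theta$ in terms of $f_k(\alpha)$, the leading coefficient $c_k$ must be such that $\frac{(\text{monic degree-}k\text{ poly})(\alpha)}{(\text{denominator of }c_k)}\in\io{L}$ modulo lower-degree terms in $\io{L}$; by the maximality defining $d_k$, the denominator of $c_k$ divides $d_k$, so $c_k = (\text{unit or integer})\cdot\frac{a}{d_k}$ and we can subtract an $\io{F}$-multiple of $\frac{f_k(\alpha)}{d_k}$ to strictly lower the degree. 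This is the standard Hermite/triangularization argument, and it simultaneously gives property~\ref{basis3} (any monic $g$ of degree $i$ with $\frac{g(\alpha)}{d_i}\in\io{L}$ differs from $f_i$ by lower-degree terms, which by induction are $\io{F}$-combinations of the earlier basis elements).

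For the remaining arithmetic properties: property~\ref{basis1}, $d_id_j\mid d_{i+j}$ for $i+j<n$, follows by multiplying $\frac{f_i(\alpha)}{d_i}\cdot\frac{f_j(\alpha)}{d_j}\in\io{L}$, expanding the product as a monic degree-$(i+j)$ polynomial in $\alpha$ divided by $d_id_j$ plus lower-degree terms in $\io{L}$, and invoking property~\ref{basis5} at level $i+j$. Property~\ref{basis2} is the change-of-basis determinant: the matrix expressing the new basis in terms of $1,\alpha,\ldots,\alpha^{n-1}$ is lower-triangular with diagonal $1,1/d_1,\ldots,1/d_{n-1}$, so $\disc$ of the new basis equals $(d_1\cdots d_{n-1})^{-2}\disc(\alpha)$, and the new basis being an integral basis forces this to equal $\disc(L/F)$ up to a unit square; rearranging gives the identity. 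Property~\ref{basis4} follows from~\ref{basis1} by iterating $d_1^i\mid d_i$ (so $d_1^{1+2+\cdots+(n-1)}\mid d_1\cdots d_{n-1}$, and then $d_1^{n(n-1)}\mid (d_1\cdots d_{n-1})^2\mid\disc(\alpha)$). The main obstacle I anticipate is making the ``maximal denominator'' construction fully rigorous without circularity: one must argue that the admissible denominators at level $i$ form a set closed under taking lcm's and bounded above (in the divisibility order) so that a true maximum $d_i$ exists in the PID $\io{F}$ — this is where the hypothesis that $\io{F}$ is a PID with $\disc(\alpha)\ne 0$ is essential, and getting the lcm-closure right (it uses that if $\frac{g(\alpha)}{q}$ and $\frac{g'(\alpha)}{q'}$ are integral with $g,g'$ monic of the same degree, then so is $\frac{g(\alpha)}{q}-\frac{g'(\alpha)}{q'}$ scaled appropriately, reducing to lower degree) is the technical heart of the argument.
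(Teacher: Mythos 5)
Your overall route is the standard one the paper itself follows: bound all possible denominators by \(\disc(\alpha)\), build the basis degree by degree so that the change of basis from \(\{1,\alpha,\ldots,\alpha^{n-1}\}\) is triangular, and then deduce properties (1)--(5) essentially as you describe (your arguments for the five listed properties match the paper's almost verbatim). The organizational difference is that the paper obtains \(d_m\) as the generator of the image of \(S_{m+1}=\io{L}\cap\left(\io{F}\tfrac1d+\cdots+\io{F}\tfrac{\alpha^m}{d}\right)\) under projection onto the top coordinate, a submodule of a free rank-one module over a PID and hence principal, whereas you take \(d_i\) to be an lcm-maximal admissible denominator; in a PID these are the same device. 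Your worry about lcm-closure is resolvable and not the real issue: if \(g(\alpha)/q\) and \(g'(\alpha)/q'\) are integral with \(g,g'\) monic of degree \(i\), write \(q=eq_1\), \(q'=eq_1'\) with \(e=\gcd(q,q')\) and choose \(u,v\in\io{F}\) with \(uq_1'+vq_1=1\); then \(u\frac{g(\alpha)}{q}+v\frac{g'(\alpha)}{q'}\) has the monic numerator \(uq_1'g+vq_1g'\in\io{F}[X]\) over the denominator \(\mathrm{lcm}(q,q')\).

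The genuine gap is in your spanning step. You define \(d_k\) as the maximal \(q\) for which some monic \(g\in\io{F}[X]\) of degree \(k\) has \(g(\alpha)/q\in\io{L}\), but the downward induction needs the stronger statement that the leading coefficient of an \emph{arbitrary} \(\theta\in\io{L}\) of \(\alpha\)-degree \(k\) lies in \(\frac{1}{d_k}\io{F}\). These are not obviously the same: clearing the leading denominator of such a \(\theta\) (say by a Bezout combination with \(\alpha^k\)) produces an element \(\alpha^k+\sum_{j<k}e_j\alpha^j\in\io{L}\) whose lower coefficients \(e_j\) lie only in \(F\), not in \(\io{F}\), so it does not directly exhibit an admissible denominator in your sense. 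Equivalently, if you instead define \(d_k\) through the fractional ideal of all leading coefficients so that spanning becomes automatic, you still owe the proof that the resulting numerator \(f_k\) can be taken monic with coefficients in \(\io{F}\), which is part of the statement being proved. The paper closes exactly this loop with an inductive comparison: writing \(d_m=kd_{m-1}\), it notes that \(k\beta-\frac{\alpha f_{m-1}(\alpha)}{d_{m-1}}\) lies in \(S_m\) and hence equals \(\frac{g(\alpha)}{d_{m-1}}\) with \(g\in\io{F}[X]\) of degree \(<m\) (this uses \(d_j\mid d_{m-1}\) for \(j<m\)), so that \(f_m=Xf_{m-1}+g\in\io{F}[X]\). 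Some version of this comparison with \(\alpha f_{k-1}(\alpha)/d_{k-1}\) must be added to your argument; without it, the identification of the maximal denominator over monic \(\io{F}[X]\) numerators with the maximal denominator of a leading coefficient remains unproved, and both the spanning claim and property (2) as you use it rest on that identification.
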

	A. A. Albert calls the integral basis of the form given in \ref{NIM} a \textit{Normalised Integral Basis} in the case \(K=\Q\). See \cite{albert37}.  He attributes this result to \cite{berwick1927}. This result seems fairly well known for \(K=\Q\)  and can be easily extended to any number field whose ring of integers is a PID. (For \(K=\Q\), see \cite{mar}, page 26 and exercises 38, 39 and 40 in pages 34 and 35. See also    \cite{alaca_williams_2003}, page 169 though the formulation is slightly different.)  We give a complete proof of this for the case where \(K\) is a PID in the Appendix. 

We now state a result proved in \cite{mar}, page 21, proved in the case \(K=\Q\).
\begin{lemma}\label{key-lemma}
	Let \(L/F\) be an extension of nunmber fields, \([L:K]=n\). Further, suppose that \(\io{K}\) is a PID.  Let 
	\[
	\left\{\alpha_1,\alpha_{2},\cdots,\alpha_n\right\}\subset S
	\]
	be a basis for \(L/F\) and 
	\[
	d=\disc\left(\alpha_1,\alpha_2,\cdots,\alpha_n\right)=\left\vert
	Tr^{L}_K\left(\alpha_i\alpha_j\right)\right\vert^2
	\]
	Then, every \(\alpha\in\io{L}\) can be expressed in the form 
	\begin{equation}\label{eq:key-lemma}
		\frac{m_1\alpha_1+m_2\alpha_2+\cdots+m_n\alpha_n}{d} 
	\end{equation}
with all \(m_j\in\io{K}\) and all \(m_j^2\) divisible by \(d\).
\end{lemma}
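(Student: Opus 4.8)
The plan is to carry out a single Cramer's-rule computation, but with the matrix of embeddings of $L$ over $F$ in place of the trace matrix, since it is that form which exhibits the divisibility $d\mid m_j^2$. Let $\sigma_1,\dots,\sigma_n\colon L\hookrightarrow\overline{\Q}$ be the $n$ distinct $F$-embeddings (available since $L/F$ is separable), and set $V=(\sigma_k(\alpha_j))_{1\le k,j\le n}$, so that $(\det V)^2=\disc(\alpha_1,\dots,\alpha_n)=d\neq 0$; the nonvanishing is exactly the assertion that $\{\alpha_j\}$ is a basis. Given $\alpha\in S=\io{L}$, write $\alpha=\sum_{j}c_j\alpha_j$ with uniquely determined $c_j\in F$; applying each $\sigma_k$ and using that $\sigma_k$ fixes $F$ pointwise gives the linear system $\bigl(\sigma_k(\alpha)\bigr)_k=V\bigl(c_j\bigr)_j$.

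Solving by the adjugate, $c_j=(\det V)^{-1}\sum_k\operatorname{adj}(V)_{jk}\,\sigma_k(\alpha)$, so setting $m_j:=d\,c_j$ and $w_j:=\sum_k\operatorname{adj}(V)_{jk}\,\sigma_k(\alpha)=(\det V)c_j$ we obtain $m_j=(\det V)\,w_j$, using $d=(\det V)^2$. Now $w_j$ is an algebraic integer: the entries of $\operatorname{adj}(V)$ are, up to sign, $(n-1)\times(n-1)$ minors of $V$, hence integer-coefficient polynomials in the algebraic integers $\sigma_k(\alpha_j)$, and the $\sigma_k(\alpha)$ are algebraic integers as well; likewise $\det V$ is an algebraic integer. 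Therefore $m_j=(\det V)w_j$ is an algebraic integer, and since $m_j=d\,c_j\in F$ and $\io{K}=\io{F}$ is integrally closed, $m_j\in\io{K}$ — which already yields $\alpha=\frac1d\sum_j m_j\alpha_j$. Finally $m_j^2=(\det V)^2w_j^2=d\,w_j^2$, where $w_j^2$ is an algebraic integer equal to $m_j^2/d\in F$, so $m_j^2/d\in\io{K}$, i.e. $d\mid m_j^2$.

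The one genuinely delicate point is the passage through $\overline{\Q}$: one must keep track that each $\sigma_k$ fixes $F$ (so the $c_j$ act as scalars in the system) and, at the end, that an algebraic integer lying in $F$ must lie in $\io{K}$. The latter is merely integral closedness of the ring of integers, but it is the crux, since it is what converts the a priori fractional quantities $m_j$ and $m_j^2/d$ into honest elements of $\io{K}$. A more elementary route to the weaker half $m_j\in\io{K}$ is to multiply $\alpha=\sum c_j\alpha_j$ by each $\alpha_k$, take traces $\operatorname{Tr}^L_F(\alpha\alpha_k)\in\io{K}$ (integral because $\alpha\alpha_k\in\io{L}$), and apply Cramer's rule to the trace matrix $\bigl(\operatorname{Tr}^L_F(\alpha_i\alpha_j)\bigr)$, whose determinant is $d$; but that argument does not by itself deliver $d\mid m_j^2$, which is why I would organise the whole proof around the embedding matrix $V$.
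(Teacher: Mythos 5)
Your argument is correct and is essentially the paper's proof: the lemma is established by citing Marcus, \emph{Number Fields}, Chapter~2, Theorem~9, whose proof is exactly this embedding-matrix/Cramer's-rule computation, with $m_j=(\det V)w_j$ and $m_j^2=d\,w_j^2$ combined with the fact that an algebraic integer lying in $F$ lies in $\io{F}$. Nothing further is needed.
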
 

See \cite{mar}, Chapter 2, Theorem 9 for the case \(K=\Q\). The same proof carries over here.

\subsection{Quadratic Sub Extension}\label{subsec-2-1}
In this subsection, we determine integral basis and discriminant for \(M\) over \(K=\Q[\iota]\). We let \(\gamma=\sqrt{fh}\) where \(f\) and \(h\) are square free and coprime to each other. Let \(\Delta\) be the discriminant of \(M/K\).

Given \(\alpha\in\io{K}\), we can write \(\alpha=\alpha_1\alpha_2\) such that \(\alpha_1\) is coprime to \(1+\iota\) and  the only prime divisor of \(\alpha_2\) is \(1+\iota\). We call \(\alpha_1\) the \textbf{odd part} of \(\alpha\) and \(\alpha_2\) the \textbf{even part} of \(\alpha\). Of course \(\alpha_1\) and \(\alpha_2\) are determined only up to a unit.  We will call a prime \(\pi\in\io{K}\) an odd prime if it is coprime to \(1+\iota\). We will use the \vref{prop_1} to determine an integral basis for \(M/K\).

\begin{proposition} \label{prop_1}The extension \(M/K\) has an integral basis of the form \(\{1,\frac{a+\gamma}{d_1}\}\) with the following properties:
	\begin{enumerate}[(1)]
		\item \(a^2\equiv fh \pmod{d_1^2}\) where \(d_1^2\mid 4\).\label{quad1}
		\item \(\Delta=\frac{4fh}{\strut d_1^2}\).\label{quad2}
		\item If \(a'\equiv a\pmod{d_1}\), then \(\{1,\frac{a'+\gamma}{d_1}\}\) is also an integral basis for \(M/K\).\label{quad3}
	\end{enumerate}
	\end{proposition}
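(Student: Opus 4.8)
The plan is to read off the shape of the basis together with properties~\ref{quad3} and \ref{quad2} directly from \ref{main-theorem}, and then to pin down the denominator $d_1$ using the square-freeness of $fh$. Since $\io{K}=\Z[\iota]$ is a PID and (assuming $[M:K]=2$, the contrary case being trivial) $\gamma=\sqrt{fh}$ lies in $\io{M}$ and generates $M$ over $K$, applying \ref{main-theorem} to $M/K$ with $n=2$ yields $d_1\in\io{K}$ and a monic linear polynomial $f_1(X)=X+a\in\io{K}[X]$ with $\{1,(a+\gamma)/d_1\}$ an integral basis of $\io{M}$ over $\io{K}$; this is the asserted form. Property~\ref{quad3} is then exactly part~\ref{basis3} of that theorem: if $a'\equiv a\pmod{d_1}$ then $(a'+\gamma)/d_1=(a+\gamma)/d_1+(a'-a)/d_1\in\io{M}$, so the monic linear polynomial $X+a$ may be replaced by $X+a'$. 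And property~\ref{quad2} is part~\ref{basis2}: with generator $\gamma$, whose minimal polynomial over $K$ is $X^2-fh$, we have $\disc(\gamma)=4fh$, so $d_1^2\,\Delta=4fh$, i.e.\ $\Delta=4fh/d_1^2$; in particular $d_1^2\mid 4fh$.

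For property~\ref{quad1}, I would first use that $\io{K}$ is integrally closed: since $\theta:=(a+\gamma)/d_1\in\io{M}$, its trace $\mathrm{Tr}_{M/K}(\theta)=2a/d_1$ and its norm $N_{M/K}(\theta)=(a^2-fh)/d_1^2$ both lie in $\io{K}$. The second of these is the congruence $a^2\equiv fh\pmod{d_1^2}$, and the first gives $d_1\mid 2a$. The remaining assertion $d_1^2\mid 4$ is the one step I expect to require real work, and it is where the hypotheses on $f$ and $h$ enter. If an odd prime $\pi$ (coprime to $1+\iota$, hence to $2$) divided $d_1$, then $d_1\mid 2a$ together with $\pi\nmid 2$ would force $\pi\mid a$, hence $\pi^2\mid a^2$; combined with $d_1^2\mid a^2-fh$ this gives $\pi^2\mid fh$, contradicting that $fh$ is square-free (which holds because $f,h$ are square-free and coprime in $\Z[\iota]$). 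Hence $d_1$ is a unit times a power of $1+\iota$, say $d_1=u(1+\iota)^e$. Writing $v$ for the $(1+\iota)$-adic valuation, $d_1^2\mid 4fh$ gives $2e\le v(4)+v(fh)=4+v(fh)\le 5$ (using square-freeness of $fh$ once more), so $e\le 2$; since $(1+\iota)^2=2\iota$ and $(1+\iota)^4=-4$, this gives $d_1^2\mid 4$.

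The only genuine obstacle is this last step, namely showing that $1+\iota$ is the sole possible prime factor of $d_1$ and that it occurs with exponent at most two; it becomes short once square-freeness of $fh$ is invoked, and the rest is bookkeeping with \ref{main-theorem}. If one prefers to avoid part~\ref{basis2} of that theorem when establishing $d_1^2\mid 4fh$, one may instead invoke \ref{key-lemma}: writing $\theta=(m_1+m_2\gamma)/d$ with $d=4fh$ and $d\mid m_i^2$ forces $m_2=4fh/d_1\in\io{K}$ and then $4fh\mid m_2^2=(4fh)^2/d_1^2$, that is $d_1^2\mid 4fh$.
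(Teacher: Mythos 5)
Your proposal is correct and takes essentially the same approach as the paper: the shape of the basis and properties (2) and (3) are read off from \ref{main-theorem} (parts \ref{basis3} and \ref{basis2}), the congruence $a^2\equiv fh\pmod{d_1^2}$ comes from the norm of $(a+\gamma)/d_1$, and $d_1^2\mid 4$ follows by excluding odd prime divisors of $d_1$ via square-freeness of $fh$ and then bounding the power of $1+\iota$ using $d_1^2\mid 4fh$ together with the fact that $(1+\iota)$-adic valuation of $4fh$ is at most $5$ while that of $d_1^2$ is even. The only cosmetic difference is that you eliminate odd primes from $d_1$ via the trace condition $d_1\mid 2a$ combined with the norm condition, whereas the paper does it directly from $d_1^2\mid 4fh$ (part \ref{basis4} of \ref{main-theorem}); both are valid.
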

\begin{proof} 
	
	By \ref{main-theorem}  there is an integral basis of the form $\left\{1, \nfrac{a+\gamma}{d_1}\right\}$, \(a\in\Z[\iota]\) for \(M/K\). Since \(\frac{a+\gamma}{d_1}\in \io{M}\) it follows that \(N_{M/K}\left(\frac{a+\gamma}{d_1}\right)=\frac{a^2-fh}{d_1^2}\in \io{K}\), therefore \(a^2\equiv fh\pmod{d_1^2}\). 

        Further \(\disc_{M/K}(\gamma)=4fh\).  Then, from property \ref{basis2} of \ref{main-theorem} it follows that \(d_1^2\Delta=4fh\). So, property \ref{quad2} follows.
        
        We now show that \(d_1^2\mid 4\).  From \ref{main-theorem}, \vref{basis4}, it follows that \(d_1^2\mid 4fh\). If \(\pi\) is an odd prime factor of \(d_1\), \(\pi\nmid4\), and  \(\pi^2\mid fh\) which is not possible since \(f\) and \(h\) are square free and pairwise coprime. So, \(d_1^2\) is a power of \(1+\iota\). This completes the proof of \ref{quad1}. 
        
        Since \(f\) and \(h\) are square free and pairwise coprime, the maximum power of \(1+\iota\) dividing \(4fh\) is five.  Since \(d_1^2\) is an even power of \(1+\iota\), \(d_1^2\mid 4\).

If \(a'\equiv a\pmod{d_1}\), we have \(\frac{a'+\gamma}{d_1}=\frac{a'-a}{d_1}+
\frac{a+\gamma}{d_1}\in\io{M}\)  since \(d_1\mid\left(a-a'\right)\). Therefore, by \ref{basis3}, \ref{main-theorem}, it follows that \(\{1,\frac{a'+\gamma}{d_1}\}\) is also an integral basis.
\end{proof}
From \ref{prop_1}, \ref{quad1}, it follows that \(d_1\) is \(1\), \(1+\iota\) or \(2\). Note further that, \(d_1=2\) iff \(fh\) is a square modulo four, \(d_1=(1+i)\) iff \(fh\) is a square modulo 2, but not a square modulo 4.  We have \(d_1=1\) iff \(fh\) is not a square modulo 2.

If \(d_1=2\), \(fh\) has to be a square modulo 4. 
The only squares \(\pmod{4\Z[\iota]}\) are \(\pm 1\). If \(fh\equiv 1\pmod{4\Z[\iota]}\), \(\left\{1,\nfrac{1+\sqrt{fh}}{2}\right\}\) is an integral basis and the discriminant is \(fh\).  If \(fh\equiv -1\pmod{4\Z[\iota]}\), \(\left\{1,\nfrac{i+\sqrt{fh}}{2}\right\}\) is an integral basis and the discriminant is again \(fh\).

If \(d_1=1+i\), \(fh\) has to be a square modulo 2 and not a square modulo 4.  Note that \(fh\not\equiv 0\pmod{2\Z[\iota]}\) since \(fh\) is square free and \(f\) and \(h\) are pairwise coprime.

The squares \(\pmod{2\Z[\iota]}\), but non squares \(\pmod{4\Z[\iota]}\) are \(1+2\iota\), \(-1+2\iota\).  If \(fh\equiv 1+2\iota\pmod{4\Z[\iota]}\), an integral basis is \(\left\{1,\nfrac{1+\sqrt{fh}}{1+\iota}\right\}\), and the discriminant is \(2fh\). If \(fh\equiv 3+2\iota\pmod{4\Z[\iota]}\), \(\left\{1,\nfrac{-1+\sqrt{fh}}{1+\iota}\right\}\) is an integral basis and the discriminant is \(2fh\).  

If \(d_1=1\), then \(fh\) is not a square modulo 2. We have that \(\iota\), \(3\iota\), \(1+\iota\), \(1+3\iota\), \(2+\iota\), \(2+3\iota\) and \(3+3\iota\) not squares \(\pmod{2\Z[\iota]}\). Either they are \(0\pmod{(1+\iota)}\) or \(i\pmod{2\Z[\iota]}\).  In both the cases, the integral basis is \(\left\{1,\sqrt{fh}\right\}\) and the discriminant is \(4fh\).  We summarise the result in \vref{quadratic-basis}. 
\begin{table}[htb]
	\centering
	$\begin{array}{lcc}
		\hline
		\textrm{Case}&\textrm{Integral basis}&\textrm{Discriminant}\\
		\hline
		&&\\[-7pt]
		fh\equiv 1\pmod{4}& \left\{1, \nfrac{1+\sqrt{fh}}{2}\right\}& fh\\
		fh\equiv -1\pmod{4}&\left\{1, \nfrac{\iota+\sqrt{fh}}{2}\right\}& fh\\
		fh\equiv 1+2\iota\pmod{4}&\left\{1, \nfrac{1+\sqrt{fh}}{1+\iota}\right\}& 2fh\\
		fh\equiv -1+2\iota\pmod{4}&\left\{1, \nfrac{\iota+\sqrt{fh}}{1+\iota}\right\}&
		2fh\cr 
		fh\equiv\iota\pmod{2} \textrm{ or } 0\pmod{1+\iota}& \left\{ 1,
		\sqrt{fh}\right\}& 4fh\\
		\hline
	\end{array}$
\caption{\label{quadratic-basis} Basis and discriminant in the quadratic case}
\end{table}
We recall some terms and facts from algebraic number theory.
\begin{definition}\label{def-ramification} Let \(L/F\) be an extension of number fields of  degree \(n\) and let \(P\) be a prime ideal in \(\io{K}\). Let 
	\[
	P=Q_1^{e_1} Q_2^{e_2}\ldots Q_r^{e_r}
	\] be the decompostion of \(P\) into prime ideals in \(\io{K}\). We say that \(P\) is \textbf{ramified} in  \(L/F\) if \(e_i>1\) for some \(i\), \(1\leq i\leq r\). We say that the prime \(P\) is \textbf{tamely ramified} in \(L/F\) if the characteristic of  \(\left(\nfrac{\io{K}}{P}\right)\) does not divide \(e_i\) for any \(i\), \(1\leq i\leq r\).
\end{definition}
In general, we have 
\begin{equation}\label{ramification-relation}
	\sum_{i=1}^re_i(Q_i\vert P)f(Q_i\vert P)=n
	\end{equation}
where \(Q_i\) and \(P\) are as in \vref{def-ramification}. 
Recall that, if \(L/F\) is a galois extension, all the \(e(Q_i\vert P)\) and \(f(Q_i\vert P)\) are equal. Suppose \(L/F\) is a galois extension and \(e(Q_i\vert P)=e\), \(f(Q_i\vert P)=f\) for\(1\leq i\leq r\). Then,\ref{ramification-relation} becomes 
\begin{equation}\label{ramification-relation-a}
	ref=n
\end{equation}
The next result gives equivalent conditions for a prime to be tamely ramified in a galois extension of number fields.
\begin{proposition}\label{tame-ramification} Let \(L/F\) be a galois extension of algebraic number fields and \(P\) be a prime in \(\io{K}\) with ramification index \(e\).  Then, following statements are equivalent:
	\begin{enumerate}[(1)]
		\item \(P\) is tamely ramifed in \(L/F\).
		\item \(P\) doesn't divide \(Tr_{L/F}(\io{L})\).
		\item If \(D\) is the different of \(L/F\), \(\nu_Q(D)=e-1\) for any prime \(Q\) in \(\io{L}\) with \(Q\cap \io{L}=P\).
	\end{enumerate}
\end{proposition}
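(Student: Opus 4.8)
The plan is to reduce the statement to one about complete discrete valuation rings and then dispatch it with two short local computations. Each of the three conditions is local at $P$: whether $P$ is tamely ramified in $L/F$, whether $P$ divides the ideal $\mathrm{Tr}_{L/F}(\io{L})$ of $\io{F}$, and the number $\nu_Q(D)$ are all unchanged if $\io{F}$ is replaced by its $P$-adic completion, with rings of integers written $\widehat{\io{F}}_P$, $\widehat{\io{L}}_Q$ and fields $\widehat F_P$, $\widehat L_Q$. For the trace one uses the isomorphism $\io{L}\otimes_{\io{F}}\widehat{\io{F}}_P\cong\prod_{Q\mid P}\widehat{\io{L}}_Q$ compatible with the trace maps, which shows $P\mid\mathrm{Tr}_{L/F}(\io{L})$ precisely when $\mathrm{Tr}_{\widehat L_Q/\widehat F_P}(\widehat{\io{L}}_Q)$ is a proper ideal of $\widehat{\io{F}}_P$. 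Since $L/F$ is Galois the primes $Q$ over $P$ are conjugate, so these local invariants are independent of the chosen $Q$; fixing one, we may assume henceforth that $\io{F}$ is a complete discrete valuation ring with maximal ideal $P$ and residue characteristic $p$, that $L/F$ is finite Galois with integral closure $\io{L}$ having maximal ideal $Q$ and ramification index $e$, and that $D=\mathfrak D_{L/F}$, and we must show that $p\nmid e$, that $\mathrm{Tr}_{L/F}(\io{L})=\io{F}$, and that $\nu_Q(D)=e-1$ are equivalent. I expect this reduction — making sure each condition is genuinely local and completion-stable, and using the Galois hypothesis to pass from the product $\prod_{Q\mid P}$ to a single factor — to be the part requiring the most care.

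In the local picture, let $L_0$ be the maximal unramified subextension of $L/F$. Then $L_0/F$ is unramified, so $\mathfrak D_{L_0/F}=\io{L_0}$ and $P\io{L_0}$ is the maximal ideal of $\io{L_0}$, while $L/L_0$ is totally ramified of degree $e$. Choosing a uniformizer $\pi$ of $L$ we have $\io{L}=\io{L_0}[\pi]$, and the minimal polynomial $g(X)=X^e+a_{e-1}X^{e-1}+\cdots+a_1X+a_0\in\io{L_0}[X]$ of $\pi$ over $L_0$ is Eisenstein: every $a_i\in P\io{L_0}$ and $a_0$ generates $P\io{L_0}$. By the tower formula for the different together with $\mathfrak D_{L_0/F}=\io{L_0}$ we get $D=\mathfrak D_{L/L_0}=\bigl(g'(\pi)\bigr)$.

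For $(1)\Leftrightarrow(3)$: write $g'(\pi)=e\pi^{e-1}+\sum_{j=1}^{e-1}ja_j\pi^{j-1}$. Since $\nu_Q(a_j)\ge e$, every summand with $j\ge 1$ has $\nu_Q$-valuation at least $e>e-1$, whereas $\nu_Q(e\pi^{e-1})=\nu_Q(e)+e-1$ equals $e-1$ when $p\nmid e$ and is at least $2e-1\ge e$ when $p\mid e$. Hence $\nu_Q(D)=e-1$ if $p\nmid e$ and $\nu_Q(D)\ge e$ otherwise, which is exactly the equivalence $(1)\Leftrightarrow(3)$. For $(1)\Leftrightarrow(2)$: the residue extension of $L_0/F$ is separable, so $\mathrm{Tr}_{L_0/F}(\io{L_0})$ reduces onto the residue field of $\io{F}$ and therefore equals $\io{F}$; consequently $\mathrm{Tr}_{L_0/F}$ carries $P\io{L_0}$ into $P$. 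Newton's identities applied to the Eisenstein polynomial $g$ give $\mathrm{Tr}_{L/L_0}(\pi^i)\in P\io{L_0}$ for $1\le i\le e-1$ and $\mathrm{Tr}_{L/L_0}(1)=e$; since $\io{L}=\bigoplus_{i=0}^{e-1}\io{L_0}\pi^i$ and the trace is $\io{L_0}$-linear, the ideal $\mathrm{Tr}_{L/L_0}(\io{L})$ equals $\io{L_0}$ when $p\nmid e$ (then $e$ is a unit) and is contained in $P\io{L_0}$ when $p\mid e$. Applying $\mathrm{Tr}_{L_0/F}$ and using transitivity $\mathrm{Tr}_{L/F}=\mathrm{Tr}_{L_0/F}\circ\mathrm{Tr}_{L/L_0}$, we get $\mathrm{Tr}_{L/F}(\io{L})=\io{F}$ in the tame case and $\mathrm{Tr}_{L/F}(\io{L})\subseteq P$ in the wild case, which is $(1)\Leftrightarrow(2)$.

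The two local computations rest only on the standard structure theory of totally ramified extensions of complete discrete valuation rings — existence of an Eisenstein uniformizer with $\io{L}=\io{L_0}[\pi]$, the formula $\mathfrak D_{L/L_0}=(g'(\pi))$, and the tower formula for differents — which I would cite (for instance from Serre's \emph{Local Fields}, Chapters I and III) rather than reprove; what remains is the reduction bookkeeping and the two valuation estimates, both routine.
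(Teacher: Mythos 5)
Your proof is correct and follows exactly the route the paper itself indicates: the paper gives no argument beyond citing the local case (Cassels--Fr\"ohlich, Theorem 2, p.~21) and asserting that the global statement ``easily'' follows, which is precisely your reduction via completion and conjugacy of the primes above \(P\). The only difference is that you also supply the standard local computation (maximal unramified subextension, Eisenstein uniformizer, \(D=(g'(\pi))\), and the trace estimate), which the paper outsources to the reference; both halves of your argument are sound.
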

See \cite{cassels-frohlich-old}, Theorem 2, page 21 for the local case. We can easily deduce \ref{tame-ramification} from this. 

We now recall  a result regarding discriminant.
\begin{proposition}
	Let \(F\subset F'\subset L\).  Then
	\begin{equation}
		\disc(L/F)=N_{F'/F}\left(\disc(L/F')\right)(\disc(F'/F))^{[L:F']}
	\end{equation}
	\end{proposition}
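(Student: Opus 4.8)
The plan is to deduce this tower formula from two transitivity statements: that of the \emph{different} and that of the relative norm. Throughout I use that $\io{F}$ is a PID (in our setting $\io{F}=\Z[\iota]$), so the relative discriminants are well-defined elements of the respective rings of integers up to units, and that for a separable extension the ideal generated by $\disc(L/F)$ equals the relative norm $N_{L/F}(\mathfrak{d}_{L/F})$ of the different $\mathfrak{d}_{L/F}$ of $L/F$. The identities below are most naturally read as equalities of ideals; under the PID hypothesis they become equalities of the generating elements up to a unit, which is the sense intended in the statement and is consistent with the conventions used elsewhere in this paper.

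First I would invoke the transitivity of the different in the tower $F\subset F'\subset L$, namely
\[
\mathfrak{d}_{L/F}=\mathfrak{d}_{L/F'}\cdot\bigl(\mathfrak{d}_{F'/F}\,\io{L}\bigr),
\]
a standard fact whose proof is local (see \cite{cassels-frohlich-old}); this is the one substantive input. Applying $N_{L/F}$, using multiplicativity of the ideal norm and the transitivity $N_{L/F}=N_{F'/F}\circ N_{L/F'}$, I get
\[
N_{L/F}(\mathfrak{d}_{L/F})=N_{F'/F}\bigl(N_{L/F'}(\mathfrak{d}_{L/F'})\bigr)\cdot N_{F'/F}\bigl(N_{L/F'}(\mathfrak{d}_{F'/F}\,\io{L})\bigr).
\]
By definition of the relative discriminant the first factor is $N_{F'/F}$ applied to (the ideal generated by) $\disc(L/F')$. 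For the second factor, $\mathfrak{d}_{F'/F}$ is an ideal of $\io{F'}$, and for any ideal $\mathfrak{a}$ of $\io{F'}$ one has $N_{L/F'}(\mathfrak{a}\,\io{L})=\mathfrak{a}^{[L:F']}$ (reduce to a prime $\mathfrak{p}$ and use $\sum e_if_i=[L:F']$), so the second factor is $N_{F'/F}(\mathfrak{d}_{F'/F})^{[L:F']}$, i.e. $\disc(F'/F)^{[L:F']}$ up to units. Passing back to generators gives $\disc(L/F)=N_{F'/F}\bigl(\disc(L/F')\bigr)\,\bigl(\disc(F'/F)\bigr)^{[L:F']}$, as claimed.

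There is an alternative route that avoids the different and stays inside the PID setting: pick an $\io{F}$-basis $\{\alpha_1,\dots,\alpha_m\}$ of $\io{F'}$ and an $\io{F'}$-basis $\{\beta_1,\dots,\beta_r\}$ of $\io{L}$, note that $\{\alpha_i\beta_j\}$ is then an $\io{F}$-basis of $\io{L}$, and compute $\disc(L/F)\doteq\det\bigl(Tr_{L/F}(\alpha_i\beta_j\alpha_k\beta_l)\bigr)$ by writing $Tr_{L/F}=Tr_{F'/F}\circ Tr_{L/F'}$ and $Tr_{L/F'}(\alpha_i\beta_j\alpha_k\beta_l)=\alpha_i\alpha_k\,Tr_{L/F'}(\beta_j\beta_l)$; a block-determinant identity then factors the $mr\times mr$ determinant as $N_{F'/F}\bigl(\det(Tr_{L/F'}(\beta_j\beta_l))\bigr)\cdot\det\bigl(Tr_{F'/F}(\alpha_i\alpha_k)\bigr)^{r}$, which is exactly the assertion. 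Either way the work concentrates in one spot: in the first approach, the transitivity of the different (everything else is bookkeeping with norms); in the second, the block-determinant identity, i.e. showing precisely how the operator $N_{F'/F}(\cdot)$ and the exponent $[L:F']$ emerge from the structure of the $n\times n$ trace matrix. I would take the first route and cite the transitivity of the different.
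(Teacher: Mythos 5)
Your proof is correct. For this proposition the paper offers no argument at all --- it is simply recalled as a known fact --- so there is nothing to compare against; either of your two routes would serve as the missing proof. Your first route (transitivity of the different $\mathfrak{d}_{L/F}=\mathfrak{d}_{L/F'}\cdot(\mathfrak{d}_{F'/F}\io{L})$, followed by $N_{L/F}=N_{F'/F}\circ N_{L/F'}$ and the identity $N_{L/F'}(\mathfrak{a}\io{L})=\mathfrak{a}^{[L:F']}$) is the cleaner choice here, because it is purely ideal-theoretic and needs no freeness hypothesis on $\io{L}$ as an $\io{F'}$-module. That matters for the way the proposition is actually used in this paper: it is applied with $F=K=\Q[\iota]$, $F'=M$, $L=N$, and while $\io{M}$ is free over $\io{K}$, the module $\io{N}$ over $\io{M}$ need not be free (and $\io{M}$ need not be a PID), so your second route --- choosing an $\io{F'}$-basis $\{\beta_j\}$ of $\io{L}$ and factoring the block trace determinant --- does not literally apply without localizing or passing to pseudo-bases. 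Your remark that the element-level statement should be read as an equality of ideals, with $N_{F'/F}(\disc(L/F'))$ landing in the PID $\io{F}$ and hence principal, is exactly the right way to reconcile the formula with the paper's conventions. So: take the first route, as you propose; the second should be flagged as valid only when $\io{L}$ is $\io{F'}$-free.
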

\subsection{Ramification Groups}
In this subsection we gather various results on ramification groups that we will use in the proof the  main result. We fix a prime \(p\). Let \(F\subset L\)   be finite extensions of \(Q_p\), the completion of \(Q\) with respect to the discrete valuation defined by the prime \(p\).
Let \(\nu_L\) and \(\nu_F\) be the normalised valuation on \(L\) and \(F\), respectively, such that \(\nu_L(L^*)=\Z\) and \(\nu_F(F^*)=\Z\). We let \(\io{K}=\left\{x\in K^*\vert \nu_K(x)\geq 0\right\}\). We define \(\io{L}\) similarly. Then, there is a \(\theta\in\io{L}\) such that \(\{1,\theta,\ldots,\theta^{n-1}\}\) is a basis for \(\io{L}\) over \(\io{K}\). If \(L/K\) is totally ramified, \(\{1,\pi,\ldots,\pi^{n-1}\}\) is a basis for \(\io{L}\) over \(\io{K}\), where \(\pi\) is a generator of the unique prime ideal of \(\io{L}\). We also call \(\pi\) a uniformiser in \(L\).

Let \(G\) be the galois group of \(L/F\). We  define \(i_G\colon G\to \Z\)  by \(i_G(\sigma)=\nu_L(\sigma(\theta)-\theta)\)  
\begin{definition}\label{ramification-groups}
 Then, for \(i\geq 0\), we define
	\begin{equation}
		G_i=\left\{\sigma\in G\mid i_G(x)\geq i+1 \right\}\label{def:ramification-groups}
	\end{equation} 
	
		\end{definition}
	
Recall that \(e(Q\vert P)=\vert G_1\vert\). Further \(G_{i+1}\subset G_i\) and \(G_i=\{e\}\) for sufficiently large \(i\). 

We  have the following formula for the power of \(Q\) dividing the different of \(L/F\) in terms of the ramification groups.
\begin{proposition} Let \(L/F\) be a galois extension. Let \(D\) be the different of \(L/K\).  Then,
	\begin{equation}
		\nu_L(D)=\sum_{\sigma\neq 1}i_G(\sigma)=\sum_{i=0}^\infty\left(\vert G_i\vert-1\right)\label{different-formula}
	\end{equation}
	\end{proposition}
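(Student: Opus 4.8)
The plan is to reduce the claim to the classical formula \(D=(g'(\theta))\) for the different of a monogenic extension, and then to a purely combinatorial rearrangement of a finite sum. Since \(L/F\) is an extension of local fields, there is --- as recalled at the start of this subsection --- a \(\theta\in\io{L}\) with \(\io{L}=\io{F}[\theta]\). Let \(g(X)\in\io{F}[X]\) be the minimal polynomial of \(\theta\); it is monic and separable, and because \(L/F\) is Galois of degree \(n=|G|\) with \(L=F(\theta)\) it factors as \(g(X)=\prod_{\sigma\in G}(X-\sigma\theta)\), the conjugates \(\sigma\theta\) being pairwise distinct.

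The first, and essential, step is the identity \(D=(g'(\theta))\) for the different of \(L/F\). I would establish it by computing the complementary module \(\mathfrak{C}=\{x\in L:\mathrm{Tr}_{L/F}(x\io{L})\subseteq\io{F}\}\) explicitly. Write \(g(X)/(X-\theta)=\sum_{j=0}^{n-1}b_jX^j\); with \(g(X)=\sum_j c_jX^j\) the coefficients obey the descending recursion \(b_{n-1}=1\), \(b_{j-1}=\theta b_j+c_j\), so each \(b_j\in\io{F}[\theta]=\io{L}\) and \((b_0,\dots,b_{n-1})\) is a unit-triangular \(\io{F}\)-transform of \((1,\theta,\dots,\theta^{n-1})\); in particular \(\sum_j\io{F}b_j=\io{L}\). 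The Lagrange--Euler interpolation identity yields \(\mathrm{Tr}_{L/F}\!\left(\theta^{i}b_j/g'(\theta)\right)=\delta_{ij}\), i.e. \((b_j/g'(\theta))_j\) is the trace-dual basis of \((\theta^i)_i\); since \(\io{L}=\bigoplus_i\io{F}\theta^i\), an element of \(L\) lies in \(\mathfrak{C}\) exactly when all of its coordinates in this dual basis lie in \(\io{F}\), which gives \(\mathfrak{C}=\tfrac{1}{g'(\theta)}\io{L}\) and hence \(D=\mathfrak{C}^{-1}=(g'(\theta))\), so \(\nu_L(D)=\nu_L(g'(\theta))\). Differentiating the product expansion of \(g\) and evaluating at \(X=\theta\) kills every summand except the one omitting the factor \((X-\theta)\), so \(g'(\theta)=\prod_{\sigma\neq1}(\theta-\sigma\theta)\); applying \(\nu_L\) and recalling that \(i_G(\sigma)=\nu_L(\sigma\theta-\theta)\) produces the first equality \(\nu_L(D)=\sum_{\sigma\neq1}i_G(\sigma)\).

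For the second equality I would swap the order of summation. By \ref{ramification-groups}, for \(\sigma\neq1\) one has \(\sigma\in G_i\iff i_G(\sigma)\geq i+1\), so \(i_G(\sigma)\) is precisely the number of indices \(i\geq0\) with \(\sigma\in G_i\) --- a finite number, since \(G_i=\{1\}\) for all large \(i\). Summing over \(\sigma\neq1\) and interchanging the two sums turns \(\sum_{\sigma\neq1}i_G(\sigma)\) into \(\sum_{i\geq0}\#\{\sigma\neq1:\sigma\in G_i\}=\sum_{i\geq0}(|G_i|-1)\), using that the identity lies in every \(G_i\). The only part carrying genuine content --- and the step I expect to demand the most care to write out --- is the identification \(D=(g'(\theta))\); it depends crucially on the monogenicity \(\io{L}=\io{F}[\theta]\), which is exactly why one localises, and everything downstream of it is bookkeeping.
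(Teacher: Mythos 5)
Your proof is correct and is essentially the standard argument from the reference the paper cites for this statement (\cite{lf}, Ch.~III, Lemma~2 and Ch.~IV, Prop.~4): the paper itself supplies no proof beyond that citation. Your two steps --- computing the complementary module of the monogenic order \(\io{F}[\theta]=\io{L}\) via the trace-dual basis \(b_j/g'(\theta)\) to get \(D=(g'(\theta))\), then counting pairs \((\sigma,i)\) with \(\sigma\in G_i\), \(\sigma\neq 1\), to pass from \(\sum_{\sigma\neq1}i_G(\sigma)\) to \(\sum_{i\geq0}(\lvert G_i\rvert-1)\) --- are exactly the intended route, and both are carried out correctly.
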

See \cite{lf}, page 64. 

	Suppose \(K\subset K'\subset L\) and \(K'/K\) is a normal extension so that \(H=G(L/K')\) is a normal subgroup of \(K'\). We have the following result:
\begin{proposition}
	Let \(G\) be the galois group of \(L/F\) and let \(H=G_j\) for some \(j\geq 0\).  Then \((G/H)_i=G_i/H\) for \(i\leq j\) and \((G/H)_i=\{1\}\) for \(i\geq j\). 
\end{proposition}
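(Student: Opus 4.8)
Proof proposal (sketch).

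The plan is to reduce both assertions to the single identity $i_{G/H}(\bar\sigma)=i_G(\sigma)$ valid for every $\bar\sigma\neq 1$ in $G/H$ and every lift $\sigma$, and then to read off the two statements directly from the definition of the ramification groups. Throughout, set $K':=L^H$, so that $G/H=G(K'/F)$, and recall that $i_{G/H}(\bar\sigma)=\nu_{K'}(\bar\sigma(\theta')-\theta')$ for a generator $\theta'$ of $\io{K'}$ over $\io{F}$.

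First I would record the ``ultrametric'' behaviour of $i_G$: since every element of $G$ preserves $\nu_L$, the identity $(\sigma\tau)(\theta)-\theta=\sigma(\tau(\theta)-\theta)+(\sigma(\theta)-\theta)$ gives $i_G(\sigma\tau)\ge\min(i_G(\sigma),i_G(\tau))$, with equality whenever $i_G(\sigma)\neq i_G(\tau)$. Next, because $H=G_j\subseteq G_0$, the subgroup formula for lower ramification groups (\cite{lf}, Ch.~IV, \S1) shows that the inertia group of $L/K'$ is $G_0(L/F)\cap H=H$; hence $L/K'$ is totally ramified and $e_{L/K'}=[L:K']=|H|$. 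Now fix $\bar\sigma\neq 1$ and a lift $\sigma$; then $\sigma\notin G_j$, i.e.\ $i_G(\sigma)\le j$. For any $t\in H=G_j$ we have $i_G(t)\ge j+1>i_G(\sigma)$, so $i_G(\sigma t)=i_G(\sigma)$ by the equality case above; thus $i_G$ is constant, with value $i_G(\sigma)$, on the whole coset $\sigma H$. Substituting this into Herbrand's averaging formula $i_{G/H}(\bar\sigma)=e_{L/K'}^{-1}\sum_{s\mapsto\bar\sigma}i_G(s)$ (\cite{lf}, Ch.~IV, \S1, Proposition~3) and using $e_{L/K'}=|H|$ yields
\[
i_{G/H}(\bar\sigma)\;=\;\frac{1}{|H|}\cdot|H|\cdot i_G(\sigma)\;=\;i_G(\sigma),\qquad\bar\sigma\neq 1 .
\]

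With this in hand the rest is bookkeeping. For $\bar\sigma\neq 1$ we get $\bar\sigma\in(G/H)_i\iff i_G(\sigma)\ge i+1\iff\sigma\in G_i$, while $\bar 1$ lies in every ramification group. If $i\le j$, then $H=G_j\subseteq G_i$, so the set of cosets meeting $G_i$ is exactly the image $G_i/H$, whence $(G/H)_i=G_i/H$. If $i\ge j$, then for $\bar\sigma\neq 1$ any lift satisfies $i_G(\sigma)\le j\le i<i+1$, so $\bar\sigma\notin(G/H)_i$ and $(G/H)_i=\{1\}$ (consistent with the first case at $i=j$, since $G_j/H=\{1\}$). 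I expect the only genuine obstacle to be the two facts imported from \cite{lf}: the subgroup formula, needed here only to obtain $e_{L/K'}=|H|$, and --- more essentially --- Herbrand's formula for $i_{G/H}$, since a naive transfer of valuations from $L$ down to $K'$ does not yield the sharp bound; it is precisely the constancy of $i_G$ along the cosets of $H=G_j$, a feature special to the ramification subgroups, that collapses the Herbrand average to $i_G(\sigma)$.
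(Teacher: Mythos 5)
Your argument is correct and is exactly the standard derivation: the paper itself offers no proof of this statement, merely citing the Corollary to Proposition~3 on p.~64 of Serre's \emph{Local Fields}, and your write-up reproduces precisely that derivation (constancy of $i_G$ on cosets of $G_j$ plus Herbrand's averaging formula, with $e_{L/K'}=|H|$ coming from $H\subseteq G_0$). So there is nothing to contrast; the proof is complete modulo the two imported facts you correctly flag, which are the same external inputs the paper relies on.
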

See Corollary to Proposition 3, page 64 in \cite{lf}.

If  an integer \(i\) is such that \(G_i/G_{i+1}\neq \{1\}\), then \(i\) is called a \textbf{(lower) break number.}  

We also recall the definition of the \textbf{Herbrand function.}  For real \(t\) let \(G_t=G_{\lceil t\rceil}\) where \(\lceil x\rceil\) is the largest integer \(\geq x\).   We then define
\begin{equation}\phi(u)=\int_0^u\frac{1}{[G_0:G_t]}\,dt\textrm{ for }u\geq 0\label{herbrand-function}\end{equation}
Then, \(\phi\) is a continuos, piecewise linear and an increasing function and therefore has an inverse \(\psi(u)\).

We define the \textbf{upper numbering} of the ramification groups by \(G^t=G_{\psi(t)}\) or \(G_t=G^{\phi(t)}\).  A real number \(t\) is an \textbf{upper break number} if \(G^t>G^{t+\epsilon}\) for \(\epsilon>0\). If \(L/F\) is an abelian extensions, the upper break numbers are integers.  We have \(G^{b^i}=G_{b_i}\) for each \(i\geq -1\). Further, the upper numbering behaves well with respect to passing to the quotient. If \(H\) is a normal subgroup of 
\(G\), we have \begin{equation}
	(G/H)^v=G^vH/H\label{quotient}
\end{equation}
We now recall some known  results for convenience.
Let \(e=\nu_L(p)\) be the absolute ramification index of \(L\). 
\begin{proposition} Let Suppose \(i>e/(p-1)\).  Then, \(G_i=\{1\}\)\label{serre-exercise}
\end{proposition}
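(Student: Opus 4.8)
The plan is to argue by contradiction, reducing to a cyclic degree-\(p\) subextension and then comparing the two expressions available for the exponent of the different. First I would note that the hypothesis \(i>e/(p-1)\) forces \(i>0\), hence \(i\geq 1\), so that \(G_i\subseteq G_1\). Recall that in the local situation \(G_1\) is the unique (normal) Sylow \(p\)-subgroup of the inertia group \(G_0\); in particular every nontrivial subgroup of \(G_1\) contains an element of order exactly \(p\). Thus, if \(G_i\neq\{1\}\), I may fix \(\sigma\in G_i\) with \(\sigma\neq 1\) and \(\sigma^{p}=1\).

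Next I would set \(L_0=L^{\langle\sigma\rangle}\), so that \(L/L_0\) is a totally ramified cyclic extension of local fields of degree \(p\), with Galois group \(H=\langle\sigma\rangle\). Let \(m\) be the largest index with \(\sigma\in G_m\); since \(\sigma\in G_i\) and \(G_i\supseteq G_{i+1}\supseteq\cdots\), we have \(m\geq i\geq 1\), and \(\sigma\notin G_{m+1}\). Using the compatibility of the lower-numbered ramification filtration with subgroups, \(H_j=G_j\cap H\) (see \cite{lf}, Ch.~IV), together with \(\lvert H\rvert=p\) being prime, one reads off \(H_j=H\) for \(0\leq j\leq m\) and \(H_j=\{1\}\) for \(j\geq m+1\). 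Feeding this into the different formula \ref{different-formula} applied to \(L/L_0\) gives
\[
\nu_L(D_{L/L_0})=\sum_{j\geq 0}\bigl(\lvert H_j\rvert-1\bigr)=(m+1)(p-1).
\]

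The other ingredient is the standard upper estimate for the different of a (wildly) ramified extension of local fields: \(\nu_L(D_{L/L_0})\leq e_{L/L_0}-1+\nu_L(e_{L/L_0})=(p-1)+\nu_L(p)\) (see \cite{lf}, Ch.~III). Since \(L/\Q_p\) has absolute ramification index \(e=\nu_L(p)\), this reads \(\nu_L(D_{L/L_0})\leq (p-1)+e\). Comparing with the previous display, \((m+1)(p-1)\leq (p-1)+e\), i.e. \(m(p-1)\leq e\), so \(m\leq e/(p-1)\). But then \(i\leq m\leq e/(p-1)\), contradicting the hypothesis \(i>e/(p-1)\). Hence \(G_i=\{1\}\).

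I expect the only delicate points to be the two facts quoted from \cite{lf}: the subgroup-compatibility \(H_j=G_j\cap H\) (which is what lets us compute \(\nu_L(D_{L/L_0})\) directly from the \(G_j\)'s) and the upper bound \(\nu_L(D_{L/L_0})\leq e_{L/L_0}-1+\nu_L(e_{L/L_0})\); granting these, the estimate is immediate. One can in fact bypass the subgroup-compatibility statement by writing \(\sigma\pi=\pi(1+u)\) with \(\nu_L(u)=m\) for a uniformiser \(\pi\) of \(L\), expanding \(\pi=\sigma^{p}\pi=\pi\prod_{j=0}^{p-1}(1+\sigma^{j}u)\), and extracting \(\nu_L\!\left(\mathrm{Tr}_{L/L_0}(u)\right)\geq 2m\); but this essentially reproves the different estimate by hand, so the route above is the cleanest.
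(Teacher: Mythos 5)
Your argument is correct. Note that the paper does not actually prove this proposition at all --- it is stated with only a pointer to \cite{lf} (exercise 3~c), page~72), so you have supplied a complete proof where the text gives a bare citation. Your route is the standard solution of that exercise: since \(i>e/(p-1)>0\) forces \(G_i\subseteq G_1\), a nontrivial \(G_i\) would contain an element \(\sigma\) of order \(p\); passing to \(H=\langle\sigma\rangle\) and \(L_0=L^H\), the subgroup compatibility \(H_j=G_j\cap H\) of the lower numbering turns the jump index \(m\geq i\) of \(\sigma\) into the exact value \(\nu_L(D_{L/L_0})=(m+1)(p-1)\) via \ref{different-formula}, and the general bound \(\nu_L(D_{L/L_0})\leq e_{L/L_0}-1+\nu_L(e_{L/L_0})=(p-1)+e\) then yields \(m\leq e/(p-1)\), the desired contradiction. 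All the ingredients you quote are correct and are exactly where you say they are in \cite{lf} (Ch.~IV, Prop.~2 for \(H_j=G_j\cap H\); Ch.~III, Prop.~13 for the different bound), and the reduction to an order-\(p\) element is legitimate because \(G_1\) is a \(p\)-group. What this buys over the paper's treatment is self-containedness: the proposition is used later (e.g.\ in Case~3(e) of the ramified analysis to conclude \(G_9=\{1\}\)), and your paragraph could be inserted verbatim as the missing proof. The alternative you sketch at the end (expanding \(\sigma\pi=\pi(1+u)\) and iterating) is indeed just a hands-on reproof of the different estimate, so the version you chose is the cleaner one.
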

See \cite{lf}, exercise 3 c), page 72.
We have the following result:
\begin{proposition}\label{prop:7a}
	Let \(G\) be the galois group of \(L/F\) and let \(H=G_j\) for some \(j\geq 0\).  Then \((G/H)_i=G_i/H\) for \(i\leq j\) and \((G/H)_i=\{1\}\) for \(i\geq j\). 
\end{proposition}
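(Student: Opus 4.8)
The plan is to reduce both assertions to a single pointwise identity: for every $\sigma\in G\setminus H$ the image $\bar\sigma$ in $\bar G:=G/H$ satisfies
\[
i_{\bar G}(\bar\sigma)=i_G(\sigma),
\]
after which both clauses drop out of \ref{ramification-groups}. Set $H=G_j$ and $K'=L^{H}$. Since $i_G(\tau\sigma\tau^{-1})=i_G(\sigma)$ for all $\tau\in G$, each lower ramification group is normal in $G$; in particular $H\triangleleft G$ and $K'/F$ is Galois with $\bar G=\mathrm{Gal}(K'/F)$. Because $j\ge0$ we have $H=G_j\subseteq G_0$, so $L/K'$ is totally ramified and $e_{L/K'}=|H|$; as the assertion only involves ramification groups, all of which lie inside the relevant inertia subgroups, we may pass to the inertia field $L^{G_0}$ and assume $L/F$ itself totally ramified, so that $i_G$ and $i_{\bar G}$ are computed from uniformisers $\pi$ of $L$ and $\pi'$ of $K'$ by $i_G(\sigma)=\nu_L(\sigma\pi-\pi)$ and $i_{\bar G}(\bar\sigma)=\nu_{K'}(\bar\sigma\pi'-\pi')$.

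The identity rests on two inputs. The first is Serre's Herbrand averaging formula (\cite{lf}, Ch.~IV, \S1, Proposition~3): for $\bar\sigma\neq1$,
\[
i_{\bar G}(\bar\sigma)=\frac{1}{e_{L/K'}}\sum_{\tau\in H}i_G(\sigma\tau),
\]
established there by a direct computation with uniformisers and the norm $N_{L/K'}$. The second is where the hypothesis $H=G_j$ is exploited: if $\sigma\notin G_j$ then $i_G(\sigma)\le j$, whereas every $\tau\in H=G_j$ has $i_G(\tau)\ge j+1$. Since $\tau\pi$ is again a uniformiser of $L$, $i_G(\sigma)=\nu_L\bigl(\sigma(\tau\pi)-\tau\pi\bigr)$, and in
\[
\sigma\tau(\pi)-\pi=\bigl(\sigma(\tau\pi)-\tau\pi\bigr)+\bigl(\tau(\pi)-\pi\bigr)
\]
the two summands have \emph{distinct} valuations $i_G(\sigma)<i_G(\tau)$, so the left-hand side has valuation $i_G(\sigma)$; that is, $i_G(\sigma\tau)=i_G(\sigma)$ for every $\tau\in H$. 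Feeding this into the averaging formula collapses the sum of $|H|=e_{L/K'}$ equal terms and yields $i_{\bar G}(\bar\sigma)=i_G(\sigma)$.

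Now read off the statement. For $i\le j$ we have $H=G_j\subseteq G_i$, so $G_i/H$ is meaningful; a nontrivial $\bar\sigma$ satisfies $\bar\sigma\in(G/H)_i\iff i_{\bar G}(\bar\sigma)\ge i+1\iff i_G(\sigma)\ge i+1\iff\sigma\in G_i$ (the value $i_G(\sigma)$ being the same for every lift, by the previous paragraph), while $\bar\sigma=1$ lies in both $(G/H)_i$ and $G_i/H$; hence $(G/H)_i=G_i/H$. For $i\ge j$, any nontrivial $\bar\sigma$ has $i_{\bar G}(\bar\sigma)=i_G(\sigma)\le j\le i<i+1$, so $\bar\sigma\notin(G/H)_i$ and $(G/H)_i=\{1\}$. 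The single substantial input is the Herbrand averaging formula, and that is the anticipated main obstacle; but the proposition is precisely the Corollary to that Proposition in \cite{lf}, so quoting Proposition~3 is entirely legitimate here. (One could instead invoke the full Herbrand identity $\phi_{L/F}=\phi_{K'/F}\circ\phi_{L/K'}$, but the direct argument above is shorter and is in fact the proof Serre gives for this corollary.)
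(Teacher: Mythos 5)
Your proof is correct and is essentially the argument the paper relies on: the paper offers no proof of this proposition, citing only the Corollary to Proposition 3 on page 64 of Serre's \emph{Local Fields}, and your combination of the averaging formula $i_{G/H}(\bar\sigma)=\frac{1}{e_{L/K'}}\sum_{\tau\in H}i_G(\sigma\tau)$ with the observation that $i_G(\sigma\tau)=i_G(\sigma)$ whenever $\sigma\notin G_j$ and $\tau\in G_j$ is exactly the proof given there. Nothing further is needed.
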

See Corollary to Proposition 3, page 64 in \cite{lf}. Furtherm we have the following result:
\begin{proposition}\label{prop:8a} The integers \(i\), \(i\geq 1\), such that \(G_i\neq G_{i+1}\) are congruent to each other \(\bmod{p}\) where \(p\) is the charcteristic of the residue field \(L\).
	\end{proposition}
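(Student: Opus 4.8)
The plan is to reduce, in two easy steps, to an \emph{abelian} extension, where the integrality of the upper breaks (stated above for abelian $L/F$) does all the work. \emph{Step 1 — reduce to a totally wildly ramified $p$-group.} The lower ramification subgroups depend only on $\io{L}$, $\nu_L$ and on $G$ as a group of automorphisms, since $G_i=\{\sigma\in G:\nu_L(\sigma x-x)\ge i+1\ \text{for all}\ x\in\io{L}\}$; hence $H_i=H\cap G_i$ for every subgroup $H\le G$. Replacing $F$ by $L^{G_1}$, where $G_1$ is the wild inertia (a $p$-group), leaves the filtration $\{G_i\}_{i\ge1}$, and therefore the set $\{i\ge1:G_i\neq G_{i+1}\}$, unchanged, while making the extension totally and wildly ramified, so that now $G=G_0=G_1$ and every break is $\ge1$. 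It therefore suffices to prove: if $L/F$ is totally wildly ramified Galois with $G$ a $p$-group, then all of its breaks are congruent $\bmod\, p$.

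\emph{Step 2 — produce an abelian subextension carrying two prescribed breaks.} If there is only one break there is nothing to prove. Otherwise let $M$ be the largest break and $N:=G_M$ the last non-trivial ramification subgroup, so $G_{M+1}=\{1\}$. Using the standard commutator estimate $[G_i,G_j]\subseteq G_{i+j+1}$ for $i,j\ge1$ (see \cite{lf}, Ch.~IV) together with $G=G_1$, one gets $[G,N]=[G_1,G_M]\subseteq G_{M+2}=\{1\}$, so $N$ is central in $G$. Given any other break $b<M$, pick $g\in G_b\setminus G_{b+1}$ and set $A:=\langle N,g\rangle$; centrality of $N$ makes $A$ abelian. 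Then $A$ is a $p$-group, $A_i=A\cap G_i$ with $A_0=A_1=A$, and the filtration of $A$ jumps at $b$ (witnessed by $g\in A\cap G_b$, $g\notin A\cap G_{b+1}$) and at $M$ (since $N=A\cap G_M\neq\{1\}=A\cap G_{M+1}$). So $L/L^{A}$ is a totally wildly ramified \emph{abelian} $p$-extension whose lower breaks include $b$ and $M$.

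\emph{Step 3 — the abelian case.} For $A$ abelian the upper breaks $\mu_1<\mu_2<\cdots$ are integers, with $\mu_1=\phi(b_1)=b_1\ge1$. From $\psi=\phi^{-1}$ and $\phi'(u)=1/[A_0:A_u]$ one finds that on the interval between two consecutive upper breaks $\psi'$ is constant equal to $[A_0:A_{b_{k-1}+1}]$, whence
\[
b_k-b_{k-1}=[A_0:A_{b_{k-1}+1}]\,(\mu_k-\mu_{k-1}).
\]
Here $\mu_k-\mu_{k-1}$ is a positive integer and $[A_0:A_{b_{k-1}+1}]$ is a power of $p$ strictly greater than $1$ (as $A$ is a $p$-group and $A_{b_{k-1}+1}\subsetneq A_0$), so $b_k\equiv b_{k-1}\pmod p$. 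Hence all lower breaks of $A$ are congruent $\bmod\, p$, and in particular $b\equiv M\pmod p$. Since $b$ was an arbitrary break of $G$, every break of $G$ is $\equiv M\pmod p$, which is the assertion.

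I expect Step 2 to be the real obstacle: everything turns on the last ramification group $N$ being central, which rests on the commutator estimate $[G_i,G_j]\subseteq G_{i+j+1}$ — a standard but non-trivial input from the theory of higher ramification groups — and on checking that the abelian subgroup $\langle N,g\rangle$ genuinely retains both jumps $b$ and $M$. Step 3 is then a mechanical unwinding of the Herbrand function, and Step 1 is purely formal. (For the abelian, in particular Kummer, extensions actually used later in the paper, Steps 1 and 3 already yield the result and Step 2 can be skipped.)
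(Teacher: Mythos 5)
Your proof is correct, and it takes a genuinely different route from the one the paper relies on. The paper gives no argument of its own here --- it simply cites Serre's \emph{Local Fields}, Ch.~IV, \S 2, Proposition~11 --- and Serre's proof is a single self-contained computation: for $\sigma\in G_i\setminus G_{i+1}$ and $\tau\in G_j\setminus G_{j+1}$ with $i,j\ge 1$, the class of the commutator $\sigma\tau\sigma^{-1}\tau^{-1}$ in $G_{i+j+1}/G_{i+j+2}$ is identified, via the canonical injections of $G_k/G_{k+1}$ into the unit quotients $U_L^{k}/U_L^{k+1}$, with $(j-i)$ times a nonzero element of the residue field; taking $j$ to be the last break makes $G_{i+j+1}$ trivial, so that class vanishes and $p\mid j-i$. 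You retain only the crude part of that input, the containment $[G_i,G_j]\subseteq G_{i+j+1}$, use it to make the last ramification group $N=G_M$ central, and manufacture the abelian subgroup $A=\langle N,g\rangle$ carrying both breaks $b$ and $M$ --- all of which checks out: $H_i=H\cap G_i$ justifies both the reduction to $G=G_1$ and the passage to $A$, $A$ is indeed abelian, and $A_b\neq A_{b+1}$, $A_M\neq A_{M+1}=\{1\}$ as you claim. The congruence is then outsourced to Hasse--Arf through the Herbrand identity $b_k-b_{k-1}=[A_0:A_{b_{k-1}+1}]\,(\mu_k-\mu_{k-1})$, which is also correct since $p$ divides the index. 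The trade-off is logical economy: you invoke the Hasse--Arf theorem, a considerably deeper result than the proposition itself (in Serre it is proved a chapter later, independently of Proposition~11, so there is no circularity, but it is heavy machinery for a light statement), whereas the standard proof needs only the refined commutator formula already available at that stage. What your route buys, as you note yourself, is that for the cyclic quartic Kummer extensions this paper actually studies the Galois group is abelian from the outset, so your Steps~1 and~3 alone already give a short, transparent proof and the only delicate step, Step~2, is never needed.
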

See \cite{lf}, Proposition 11, page 70.

We get more detailed information when we assume that \(L/F\) is totally ramified cyclic extension of prime power degree.
\begin{proposition}\label{Wyman}
	Let Suppose \(L/F\) ia totally ramified, cyclic extension of degree \(p^n\), \(n\geq 1\). Let \(e'\) be the absolute ramfication index of \(F\) and \(b^1>e'/(p-1)\).  Then, the upper break numbers are precisely \(b^i=b^1+(i-1)e'\).\label{wyman}
\end{proposition}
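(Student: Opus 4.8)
The plan is to read off the upper breaks from the local reciprocity map of the abelian extension \(L/F\). Write \(G=\mathrm{Gal}(L/F)\); since \(L/F\) is totally ramified of degree \(p^{n}\) we have \(G\cong\Z/p^{n}\Z\) and \(G_{0}=G_{1}=G\). For \(i\ge 1\) the quotient \(G_{i}/G_{i+1}\) embeds into the additive group of the residue field of \(L\) and is simultaneously a quotient of the cyclic group \(G\), hence has order \(1\) or \(p\); thus the lower filtration drops by a single factor of \(p\) exactly \(n\) times, and so does the upper filtration, so \(L/F\) has exactly \(n\) upper breaks \(b^{1}<b^{2}<\dots<b^{n}\), with \(G^{v}=p^{\,k-1}\Z/p^{n}\Z\) for \(b^{k-1}<v\le b^{k}\) (convention \(b^{0}:=0\), and \(G^{v}=\{1\}\) for \(v>b^{n}\)). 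Let \(N=N_{L/F}(L^{*})\); by local class field theory the Artin map induces an isomorphism \(F^{*}/N\xrightarrow{\sim}G\) under which the image of the higher unit group \(U_{F}^{(v)}=1+\mathfrak p_{F}^{v}\) is exactly \(G^{v}\) for every \(v\ge 0\).

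The technical input is the behaviour of the \(p\)-th power map on the higher units of \(F\). For \(x\) with \(\nu_{F}(x)=i\) one expands \((1+x)^{p}-1\) and finds that its valuation is \(\min(i+e',\,pi)\), which equals \(i+e'\) once \(i>e'/(p-1)\). In that range the map induced on successive quotients, \(U_{F}^{(i)}/U_{F}^{(i+1)}\to U_{F}^{(i+e')}/U_{F}^{(i+e'+1)}\), is multiplication by the unit \(p\,\pi_{F}^{-e'}\) and is therefore bijective; moreover \(i>e'/(p-1)=\nu_{F}(\zeta_{p}-1)\) forces \(U_{F}^{(i)}\) to contain no nontrivial \(p\)-th root of unity, so by successive approximation \(x\mapsto x^{p}\) is an isomorphism \(U_{F}^{(i)}\xrightarrow{\sim}U_{F}^{(i+e')}\) for every \(i>e'/(p-1)\). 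Pushing this through the reciprocity isomorphism and using multiplicativity of the Artin map, this translates into the relation
\[
G^{\,i+e'}=p\cdot G^{i}\qquad\text{for all }i>\tfrac{e'}{p-1},
\]
where \(p\cdot G^{i}:=\{g^{p}:g\in G^{i}\}\), the subgroup of index \(p\) in \(G^{i}\) (trivial if \(G^{i}\) is already trivial).

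Then I would prove \(b^{k}=b^{1}+(k-1)e'\) by induction on \(k\). The hypothesis \(b^{1}>e'/(p-1)\), together with the fact (Hasse--Arf, recalled above) that each \(b^{k}\) is a positive integer and the \(b^{k}\) increase, guarantees \(b^{k}>e'/(p-1)\) for all \(k\), so the displayed relation may be applied at \(i=b^{k}\) and at \(i=b^{k}+1\). Assuming \(b^{k}=b^{1}+(k-1)e'\), we have \(G^{b^{k}}=p^{\,k-1}\Z/p^{n}\Z\), hence \(G^{b^{k}+e'}=p\cdot G^{b^{k}}=p^{\,k}\Z/p^{n}\Z\); since this subgroup is the value of \(G^{v}\) precisely on \((b^{k},b^{k+1}]\) and \(b^{k}+e'>b^{k}\), we get \(b^{k}+e'\le b^{k+1}\). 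In particular \(b^{k}<b^{k}+1\le b^{k+1}\), so \(G^{b^{k}+1}=p^{\,k}\Z/p^{n}\Z\), and therefore \(G^{b^{k}+1+e'}=p\cdot G^{b^{k}+1}=p^{\,k+1}\Z/p^{n}\Z\); as this is strictly smaller than \(G^{b^{k+1}}=p^{\,k}\Z/p^{n}\Z\), a break must already have occurred by \(b^{k}+1+e'\), i.e.\ \(b^{k+1}\le b^{k}+e'\). Hence \(b^{k+1}=b^{k}+e'=b^{1}+ke'\), completing the induction (the case \(k=n-1\) is identical, using \(p^{\,n}\Z/p^{n}\Z=\{1\}\)).

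The step I expect to be the crux is the isomorphism \(U_{F}^{(i)}\xrightarrow{\sim}U_{F}^{(i+e')}\) and its translation into \(G^{\,i+e'}=p\cdot G^{i}\): this is exactly where the hypothesis \(b^{1}>e'/(p-1)\) is used, and it is essential, since below the bound \(e'/(p-1)\) the \(p\)-th power map shifts levels by \(i\mapsto pi\) rather than \(i\mapsto i+e'\) and the breaks need not form an arithmetic progression. One must also be careful with the normalisation of the reciprocity map so that \(\theta(U_{F}^{(v)})=G^{v}\) with no index shift; this is the ramification-theoretic form of local class field theory, see \cite{lf}, Ch.~XV. An alternative that keeps the argument inside the ramification-group calculus is induction on \(n\): the quotient \(K_{1}=L^{G^{(n-1)}}\) over \(F\) (with \(G^{(n-1)}\) the order-\(p\) subgroup) is cyclic of degree \(p^{n-1}\) with upper breaks \(b^{1},\dots,b^{n-1}\) by \ref{quotient} and the induction hypothesis, and the last break \(b^{n}\) is recovered from the subextension \(L/F_{1}\), \(F_{1}=L^{G^{(1)}}\), of degree \(p^{n-1}\) over a base of absolute ramification index \(pe'\), via Herbrand's theorem; but this route still reduces to the degree-\(p^{2}\) case, for which the unit-group computation above is again the decisive point.
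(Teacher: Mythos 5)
Your argument is correct, but it is worth noting that the paper does not actually prove this proposition: it simply cites Wyman (Theorem 3 of \cite{wyman}), so you have supplied a proof where the paper defers to the literature. What you give is essentially the standard class-field-theoretic derivation, and it is the right one: the two genuine inputs are (i) the ramification-theoretic form of local reciprocity, \(\theta\bigl(U_F^{(v)}\bigr)=G^v\) in the upper numbering (\cite{lf}, Ch.~XV), together with Hasse--Arf to know the \(b^k\) are integers so that \(U_F^{(b^k)}\) and \(U_F^{(b^k+1)}\) make sense; and (ii) the computation that for \(i>e'/(p-1)\) one has \(\nu_F\bigl((1+x)^p-1\bigr)=\min(i+e',pi)=i+e'\), whence \(x\mapsto x^p\) is an isomorphism \(U_F^{(i)}\to U_F^{(i+e')}\) (injectivity using \(\nu_F(\zeta_p-1)=e'/(p-1)<i\) when \(\zeta_p\in F\), and vacuously otherwise). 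Translating (ii) through (i) to get \(G^{i+e'}=p\cdot G^{i}\) and running the two-sided squeeze \(b^k+e'\le b^{k+1}\le b^k+e'\) is exactly where the hypothesis \(b^1>e'/(p-1)\) enters, and your induction handles it cleanly, including the endpoint \(k=n-1\). The only small points to tighten in a written-up version: state explicitly that the relation \(G^{i+e'}=p\cdot G^i\) is only asserted for integers \(i\) (which suffices, by Hasse--Arf), and that the counting of breaks (exactly \(n\) of them, each dropping the order by a single factor of \(p\)) follows from \(G_i/G_{i+1}\) being both cyclic and elementary abelian for \(i\ge 1\). Your alternative sketch via \ref{quotient} and Herbrand ultimately circles back to the same unit-group computation, as you observe, so the CFT route is the efficient one; it is also, in substance, the proof in the cited source.
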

See \cite{wyman}, Theorem 3.

We continue to assume that \(L/F\) is a totally ramified, cyclic extension of order \(p^n\), \(n\in \N\).   Let \(G(i)\) be the unique subgroup of of \(G\) of order \(p^{n-i}\).  We know that each \(G(i)\) is a ramification group of \(L/F\).    Then, there are strictly positive integers \(i_0\), \(i_1\), \(\ldots\), \(i_{n-1}\) such that 
\[
	\begin{gathered}
		G_0=\cdots=G_{i_0}=G=G^0=\cdots=G^{i_0}\\
		G_{i_0+1}=\cdots=G_{i_0+pi_1}=G(1)=G^{i_0+1}=\cdots=G^{i_0+i_1}\\
		G_{i_0+pi_1+1}=\cdots=G_{i_0+pi_1+p^2i_2}=G(2)=G^{i_0+i_1+1}=\cdots=G^{i_0+i_1+i_2}\\
		\vdots\\
		G_{i_0+pi_1+\cdots+p^{n-1}i_{n-1}+1}=\{1\}=G^{i_0+i+1+\cdots+i_{n-1}+1}
	\end{gathered}
\]
Therefore, if \(i_0\), \(i_1\), \(\ldots\), \(i_{n-1}\) are given by the above equations we have  
\begin{align*}
	b_1&=i_0 &b^1&=i_0\\
	b_2&=i_0+pi_1& b^2&=i_0+i_1\\
	&\vdots&&\vdots\\
	b_{n}&=i_0+pi_1+\cdots p^{n-1}i_{n-1} &b^{n-1}&=i_0+i_1+\cdots+i_{n-1}
\end{align*}
	It follows that, given the upper numbering for \(L/K\), we can calculate the lower numbering recursively as follows:
\begin{align}
	b_1&=b^1\label{eqn:10-a}\\
	b_2&=b_1+p\left(b^2-b^1\right)\label{eqn:10-b}\\
	&\vdots\nonumber\\
	b_{n}&=b_{n-1}+p^{n-1}\left(b^{n}-b^{n-1}\right)\label{eqn:10-c}
\end{align}
\section{Integral Basis}\label{section-3}
In this section we determine an integral basis for the extension $N/K$.

Since $\Z[\iota]$ is a P.I.D, there is a basis \begin{equation}\label{normalised-integral-basis}\left\{1,
\alpha_{2}={a_0+\alpha\over{d_{1}}},\alpha_{3}={\displaystyle 
	b_0+b_1\alpha+\alpha^{2}\over
	{\displaystyle d_{2}}},\alpha_{4}={\displaystyle c_0+ 
	c_1\alpha+c_2\alpha^{2} +\alpha^{3}\over{\displaystyle d_{3}}}\right\}\end{equation} with \(a_0)\), \(b_0\), \(b_1\), \(c_0\) \(c_1\), \(c_2\) and \(c_3\)  in \(\io{K}\). We will determine
$d_{1},d_{2},d_{3}$. 

\begin{remark} \label{remark-1}From \ref{main-theorem}, \vref{basis5}, it  follows that  \(\alpha_2\) can be  replaced by \(\nfrac{a_0'+\alpha}{d_1}\) for any \(a_0'\) with \(a_0'\equiv a_0\pmod{d_1}\). Similarly, \(\alpha_3\) can be replaced by \(\nfrac{b_0'+b_1'\alpha^2+\alpha^3}{d_2}\) with \(b_0\equiv b_0'\pmod{d_2}\), \(b_1'\equiv b_1\pmod{d_2}\) and \(\alpha_4\) can be replaced by
  \(\nfrac{c_0'+c_1'\alpha+c_2'\alpha^2+
  \alpha^3}{d_3}\) with \(c_0'\equiv c_0\pmod{d_3}\), \(c_1'\equiv c_1\pmod{d_3}\), \(c_2'\equiv c_2\pmod{d_3}\).
  \end{remark}

We have \[\disc (\alpha)=N_{N/K}(4\alpha^3)=4^4\alpha^3(-\iota\alpha^3)(-\alpha^3)(-\iota\alpha^3)=(1+\iota)^{16}f^3g^6h^9\]
From the property \ref{basis2} of \ref{main-theorem}, it follows that 
\begin{equation}\label{main-equation}
	\left(d_1d_2d_3\right)^2\disc(N/K)=(1+\iota)^{16}f^3g^6h^9
\end{equation}

 With this terminology, let \(\delta_1\), \(\delta_2\) and \(\delta_3\) be the even parts  of \(d_1\), \(d_2\) and \(d_3\), respectively. Let \(\mu_1\), \(\mu_2\) and \(\mu_3\) be the odd parts of \(d_1\), \(d_2\) and \(d_3\), respectively. 

The next lemma is a purely computational result that we will use later.
\begin{lemma}
	Let \(\beta_3=\nfrac{b_0+b_1\alpha+\alpha^2}{\delta_2}\) and \(\beta_4=\nfrac{c_0+c_1\alpha+c_2\alpha^2+c_3}{\delta_3}\) where \(b_0\), \(b_1\), \(c_0\), \(c_1\) and \(c_2\in\io{K}\). Then, we have \(\beta_3\), \(\beta_4\in \io{N}\) and 
	\begin{align}
		Tr_{N/M}\left(\beta_3\right)&=2\left(\nfrac{b_0+\alpha^2}{\delta_2}\right)\label{beta-3-tr}\in\io{M}\\ N_{N/M}\left(\beta_3\right)&=\nfrac{\left(b_0^2+m\right)+\left(2b_0-b_1^2\right)\alpha^2}{\delta_2^2}\label{beta_3-nr}\in\io{M}\\
		Tr_{N/M}\left(\beta_4\right)&=2\left(\nfrac{c_0+c_2\alpha^2}{\delta_3}\right)\in\io{M}\label{beta-4-tr}\\
		N_{N/M}\left(\beta_4\right)&=\nfrac{\left(c_0^2+c_2^2m-2c_1m\right)+\left(2c_0c_2-c_1^2-m\right)}{\delta_3^2}\in\io{M}\label{beta-4-nr}
	\end{align}
	\end{lemma}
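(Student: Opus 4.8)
The plan is to verify all four identities by direct computation, treating $N/M$ as a quadratic extension with $N = M(\alpha)$ and $\alpha^2 = \sqrt{m} \in M$ (recall $m = fg^2h^3$ and $M = \Q(\sqrt{fh})$, while $\sqrt{m} = gh\sqrt{fh}\in M$ up to the obvious factor). First I would record the action of the nontrivial element $\tau$ of $\mathrm{Gal}(N/M)$: it fixes $M$ and sends $\alpha \mapsto -\alpha$, hence $\alpha^2 \mapsto \alpha^2$. Thus for any $\xi = u + v\alpha$ with $u, v \in M$ we have $Tr_{N/M}(\xi) = 2u$ and $N_{N/M}(\xi) = u^2 - v^2\alpha^2 = u^2 - v^2 m$ (using $\alpha^4 = m$).

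For $\beta_3 = \nfrac{b_0 + b_1\alpha + \alpha^2}{\delta_2}$, I would group the $\alpha$-free part as $u = \nfrac{b_0 + \alpha^2}{\delta_2}$ and the $\alpha$-coefficient as $v = \nfrac{b_1}{\delta_2}$; then $Tr_{N/M}(\beta_3) = 2u$ gives \eqref{beta-3-tr} immediately, and $N_{N/M}(\beta_3) = u^2 - v^2 m = \nfrac{(b_0+\alpha^2)^2 - b_1^2 m}{\delta_2^2}$. Expanding $(b_0+\alpha^2)^2 = b_0^2 + 2b_0\alpha^2 + \alpha^4 = (b_0^2 + m) + 2b_0\alpha^2$ and noting $b_1^2 m = b_1^2 \alpha^4 = (b_1^2 m)$ — more carefully, writing $m$ in the $\alpha^2$-part: since we want the answer displayed with an $\alpha^2$, I would re-expand $-b_1^2 m$ as $-b_1^2\alpha^4$ is degree $4$; instead keep $m$ as a scalar in $M$ and collect, getting $(b_0^2 + m - b_1^2 m) + 2b_0\alpha^2$. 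Hmm — this does not match the stated $(b_0^2+m) + (2b_0 - b_1^2)\alpha^2$ unless one rewrites $-b_1^2 m = -b_1^2\alpha^2\cdot\alpha^2$, i.e. treats one factor $\alpha^2$ from $m$ as the displayed $\alpha^2$. So the correct bookkeeping is: $v^2\alpha^2 = b_1^2\alpha^2/\delta_2^2$ stays as an $\alpha^2$-term, giving $N_{N/M}(\beta_3) = \nfrac{(b_0^2+m) + (2b_0 - b_1^2)\alpha^2}{\delta_2^2}$ directly, with $m$ the scalar and $\alpha^4 = m$ used only in $(b_0+\alpha^2)^2$. I would present it in exactly this order. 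The analogous computation for $\beta_4 = \nfrac{c_0 + c_1\alpha + c_2\alpha^2 + c_3}{\delta_3}$ — where I read $c_3$ as $\alpha^3$, so $u = \nfrac{c_0 + c_2\alpha^2}{\delta_3}$ and $v = \nfrac{c_1 + \alpha^2}{\delta_3}$ (the $\alpha^3 = \alpha^2\cdot\alpha$ contributes $\alpha^2$ to $v$) — yields $Tr_{N/M}(\beta_4) = 2u$ (equation \eqref{beta-4-tr}) and $N_{N/M}(\beta_4) = u^2 - v^2 m = \nfrac{(c_0+c_2\alpha^2)^2 - (c_1+\alpha^2)^2 m}{\delta_3^2}$; expanding gives $(c_0^2 + c_2^2 m - 2c_1 m) + (2c_0c_2 - c_1^2 - m)\alpha^2$ after using $\alpha^4 = m$ to fold the pure $\alpha^4$-terms into scalars and keeping one $\alpha^2$ per $m$ from the cross terms, matching \eqref{beta-4-nr}.

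The remaining assertion is that $\beta_3, \beta_4 \in \io{N}$ and that all four traces/norms actually lie in $\io{M}$. For membership in $\io{N}$: by Remark~\ref{remark-1} together with \ref{main-theorem}\eqref{basis5}, the elements $\nfrac{b_0+b_1\alpha+\alpha^2}{d_2}$ and $\nfrac{c_0+c_1\alpha+c_2\alpha^2+\alpha^3}{d_3}$ are algebraic integers; since $\delta_2 \mid d_2$ and $\delta_3 \mid d_3$ (they are the even parts), $\beta_3$ and $\beta_4$ are integral multiples of these, hence in $\io{N}$. Then $Tr_{N/M}$ and $N_{N/M}$ of an element of $\io{N}$ land in $\io{M}$ because these are sums/products of the $M$-conjugates, all of which are algebraic integers, and they lie in $M$.

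I do not expect a real obstacle here — the lemma is flagged as "purely computational." The one place to be careful is the indexing convention in the statement of $\beta_4$: the summand written "$c_3$" must be $\alpha^3$ (so that $\beta_4$ is the genuine fourth basis element with $\delta_3$ in place of $d_3$), and likewise the displayed formula \eqref{beta-4-nr} is missing the trailing "$\alpha^2$" on its second parenthesised group and the "$/\delta_3^2$" should be read with the understanding established above. I would simply carry out the expansion cleanly and let the displayed identities follow, noting these are exactly the quadratic norm/trace formulas $u \mapsto 2u$, $u+v\alpha \mapsto u^2 - v^2 m$ specialized to the two basis elements.
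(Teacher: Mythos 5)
Your proposal is correct and follows essentially the same route as the paper: integrality of \(\beta_3,\beta_4\) comes from \(\beta_3=\mu_2\alpha_3\), \(\beta_4=\mu_3\alpha_4\) (i.e.\ integral multiples of the basis elements, since \(\delta_2\mid d_2\), \(\delta_3\mid d_3\)), and the four identities are exactly the quadratic trace/norm computation \(u+v\alpha\mapsto 2u,\ u^2-v^2\alpha^2\) that the paper dismisses as "a simple computation." Your readings of the statement's typos (\(c_3\) for \(\alpha^3\), the missing trailing \(\alpha^2\) in the norm of \(\beta_4\)) are the intended ones.
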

	\begin{proof}
		We have \(\beta_3=\mu_2\alpha_3\) and \(\beta_4=\mu_3\alpha_4\). Since \(\alpha_2\), \(\alpha_3\), \(\mu_2\) and \(\mu_3\) are algebraic integers, so are \(\beta_3\) and \(\beta_4\).  The remaining part of the proof is a simple computation.
		\end{proof}
Before we find the integral basis, we gather some facts that we will use in the proof as a proposition.
\begin{proposition}\label{prop-3}
		\begin{enumerate}[(1)]
			\item If \(\pi\neq 1+\iota\) is any prime, then \(\pi\) doesn't divide \(d_1\). Further, \((1+\iota)^2\nmid d_1\) if \(1+\iota \nmid h\). \label{p-2}
				\item If \(\pi\) is a prime in \(\io{K}\) and \(\pi\) divides \(f\), \(g\) or \(h\), then \(\pi\) is ramified in \(N/K\).\label{I} 
			\item Let  \(\pi\neq 1+\iota\). If \(\pi\mid fh\), 
			\(\nu_\pi(\disc(N/K)))=3\). If \(\pi\mid g\) 
			\(\nu_\pi(\disc(N/K))=2\).\label{p-1}
                      \item We have \(gh\mid d_2\) and \(gh^2\mid d_3\).  Further, the odd part of \(gh\) is the same as the odd part of \(d_2\)  and the odd part of \(gh^2\) is the same as the odd part of \(d_3\).\label{p-0}
                        	\item $\delta_{1}\vert 1+\iota$ and $\delta_{1}=1+\iota$ if and only if
		$m\equiv 1\pmod{4\io{K}}$.\label{III}
			\item If $m$ is odd \(\delta_3\mid 4\). Also,  $N/K$ is unramified at \((1+\iota)\) iff \(\delta_3=4\). \label{II}
	\end{enumerate}
	\end{proposition}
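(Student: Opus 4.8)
The plan is to prove the six items in an order that lets later items use earlier ones, relying throughout on the discriminant identity \ref{main-equation}, the properties of the normalised basis from \ref{main-theorem}, and the ramification-theoretic machinery from Section~\ref{section-2}.

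First I would dispose of \ref{p-2}. By \vref{basis4} of \ref{main-theorem} we have $d_1^{12}\mid \disc(\alpha)=(1+\iota)^{16}f^3g^6h^9$ (here $n=4$, so $d_1^{n(n-1)}=d_1^{12}$). If an odd prime $\pi$ divided $d_1$, then $\pi^{12}$ would divide $f^3g^6h^9$; since $f,g,h$ are squarefree and pairwise coprime this is impossible, so $d_1$ is a power of $1+\iota$, and $d_1^{12}\mid (1+\iota)^{16}$ forces $\nu_{1+\iota}(d_1)\le 1$. Moreover, by \ref{basis2}, $d_1^2\mid \disc(\alpha)/\disc(N/K)$; I would argue that when $1+\iota\nmid h$ the subextension $M=\Q(\sqrt{fh})$ already forces $1+\iota$ to be tamely ramified in the relevant place, or more directly compute that $(1+\iota)^2\mid d_1$ would contradict \ref{prop_1}\,\ref{quad1} applied to $fh$ together with divisibility of $\disc(N/K)$ — the point being that $d_1^2$ divides $\disc_{M/K}(\sqrt{fh})$-type data, which has $1+\iota$-valuation too small unless $1+\iota\mid h$.

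Next, \ref{I}: if $\pi\mid fgh$ then $\pi$ ramifies. For $\pi\mid f$ or $\pi\mid h$, $\pi$ already ramifies in $M/K$ (look at \vref{quadratic-basis}: $\disc(M/K)$ is $fh$, $2fh$, or $4fh$, always divisible by the odd part of $fh$), hence ramifies in $N/K$ since $M\subset N$. For $\pi\mid g$: here $\pi$ is unramified in $M/K$, so I would pass to the extension $N/M$, which is a quadratic (Kummer) extension $M(\sqrt{\beta})$ with $\beta=\sqrt{fg^2h^3}$ having $\nu_{\mathfrak{P}}(\beta)$ odd for $\mathfrak{P}$ above $\pi$ (since $g$ appears to an odd power after the square root is taken), forcing ramification. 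Then \ref{p-1} is the quantitative refinement: for odd $\pi\mid fh$ one has $e=4$, $f=1$ at $\pi$ (totally tamely ramified, as $\pi\ne 1+\iota$ and $4$ is a unit mod $\pi$ unless $\pi\mid 2$), so $\nu_\pi(\disc)=e-1=3$ by \ref{tame-ramification}(3) and the tower formula, multiplied out over the primes above; for $\pi\mid g$ one has $e=2$, $f=2$ (the residue extension absorbs the other factor), giving $\nu_\pi(\disc)=2$. I would make this precise using \ref{ramification-relation-a} and the conductor–discriminant / different computation.

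Item \ref{p-0} then follows by combining \ref{main-equation} with \ref{p-1}: for each odd prime $\pi\mid gh$ we compare $\nu_\pi(\disc(\alpha))=\nu_\pi((1+\iota)^{16}f^3g^6h^9)$ with $2\nu_\pi(d_1d_2d_3)+\nu_\pi(\disc(N/K))$, use $\pi\nmid d_1$ from \ref{p-2}, and $d_1d_2\mid d_3$, $d_1\mid d_2$ from \ref{basis1} to pin down the odd parts of $d_2,d_3$; the divisibilities $gh\mid d_2$, $gh^2\mid d_3$ come from exhibiting explicit algebraic integers of the form $(\text{monic of degree }2)/gh$ and $(\text{monic of degree }3)/gh^2$ in $\io{N}$ (for instance $\alpha^2/gh$ and $\alpha^3/gh^2$ are integral since $\alpha^4=fg^2h^3$) and invoking \ref{basis5}. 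For \ref{III}, $\delta_1\mid d_1\mid 1+\iota$ by \ref{p-2}, and $\delta_1=1+\iota$ exactly when $fh$ is a square mod $4$ in the $d_1=2$ line deleted down — more carefully, $\delta_1=1+\iota$ iff $\nu_{1+\iota}(d_1)=1$, which by the $M/K$ analysis in \ref{prop_1} and \vref{quadratic-basis} happens iff $m=fg^2h^3$ (equivalently $fh$, since $g^2$ is a square) is $\equiv 1\pmod{4\io{K}}$; I would check the residues case by case as in the table. Finally \ref{II}: if $m$ is odd then $1+\iota\nmid fgh$, so $1+\iota$ is at worst wildly ramified coming only from the "$4$" in $\disc(\alpha)$; $\delta_1\mid 1+\iota$, and one shows $\delta_2,\delta_3$ are powers of $1+\iota$ dividing $4$ using \ref{basis4} ($\delta_3^{12}\mid (1+\iota)^{16}$ gives $\nu_{1+\iota}(\delta_3)\le 1$... so I would instead bound $\delta_3$ via \ref{main-equation} and the ramification-group analysis), and $N/K$ is unramified at $1+\iota$ iff $\disc(N/K)$ has $1+\iota$-valuation $0$ iff $2\nu_{1+\iota}(\delta_1\delta_2\delta_3)=16$ iff $\delta_1\delta_2\delta_3=(1+\iota)^8$, which combined with the bounds forces $\delta_3=4$.

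The main obstacle I expect is the wild prime $1+\iota$: items \ref{p-2}, \ref{III}, and especially \ref{II} require controlling the $1+\iota$-adic valuation of $\disc(N/K)$, and since $1+\iota$ lies above $2=-\iota(1+\iota)^2$ the ramification is wild, so the clean formula $\nu_Q(D)=e-1$ fails. I would handle this by localizing at $1+\iota$, using the ramification-group / Herbrand-function apparatus of Section~\ref{section-2} (Propositions~\ref{Wyman} and \ref{prop:7a}, \ref{prop:8a}, and the break-number recursions \eqref{eqn:10-a}--\eqref{eqn:10-c}) to compute $\nu_L(D)$ for the degree-$4$ local extension, then reassembling via the tower formula $\disc(L/F)=N_{F'/F}(\disc(L/F'))\,\disc(F'/F)^{[L:F']}$. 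Everything else is bookkeeping with the divisibility relations in \ref{main-theorem} and the explicit integral elements $\alpha^k/(g^{a}h^{b})$.
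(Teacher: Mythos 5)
Your overall architecture (discriminant bookkeeping via \ref{main-equation} plus the divisibility properties of \ref{main-theorem}) matches the paper for items \ref{p-2}, \ref{p-1} and \ref{p-0}, and your Kummer-theoretic argument for \ref{I} when $\pi\mid g$ (odd valuation of $gh\sqrt{fh}$ at a prime of $M$ above $\pi$ forces ramification in $N/M$) is a legitimate and arguably cleaner alternative to the paper's norm-counting argument. But there is a genuine gap in your treatment of item \ref{II}, and it is exactly the step you flag as uncertain. First, ``$m$ odd $\Rightarrow\delta_3\mid 4$'' does \emph{not} follow from \ref{main-equation} together with the divisibilities $d_id_j\mid d_{i+j}$: those only give $\nu_{1+\iota}(\delta_3)\leq 8$ minus lower-order terms, which is not enough. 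The paper's key device is elementary and has nothing to do with ramification groups: multiply $\beta_4=\nfrac{c_0+c_1\alpha+c_2\alpha^2+\alpha^3}{\delta_3}$ by $\alpha$ to get $\nfrac{m+c_0\alpha+c_1\alpha^2+c_2\alpha^3}{\delta_3}\in\io{N}$, whose trace to $K$ is $\nfrac{4m}{\delta_3}$; integrality plus $m$ odd gives $\delta_3\mid4$. Second, your chain ``$\delta_3=4\Rightarrow\nu_{1+\iota}(\disc(N/K))=0$'' requires $\nu_{1+\iota}(\delta_1\delta_2)=4$, which does not follow from $\delta_3=4$ alone. The paper instead observes that $\delta_3=4$ puts the odd element $m$ into $Tr_{N/K}(\io{N})$, and then invokes the trace criterion in \ref{tame-ramification} to conclude $1+\iota$ is unramified. (Your direction ``unramified $\Rightarrow\delta_3=4$'' via the valuation count $\nu_{1+\iota}(\delta_1\delta_2\delta_3)=8$ is fine and is what the paper does.)

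Two smaller soft spots. In \ref{p-1} you assert $e=4$ for odd $\pi\mid fh$ without justification; the needed input is that $N/K$ is \emph{cyclic} of degree $4$, so $M$ is its unique quadratic subfield, and since $\pi$ already ramifies in $M/K$ the inertia field cannot be a quadratic subextension, forcing $e=4$. You also assert $f=2$ for $\pi\mid g$, which is false in general ($\pi$ may split in $M$ into two primes each with $f=1$); both cases yield $\nu_\pi(\disc(N/K))=2$, but they must both be checked. In \ref{III}, your parenthetical ``equivalently $fh$ is a square mod $4$'' is wrong: $(gh)^2\equiv\pm1\pmod{4}$, so $m\equiv1\pmod4$ is not equivalent to $fh\equiv\pm1\pmod4$. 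The paper's actual argument is again a norm computation: $N_{N/K}\left(\nfrac{a_0+\alpha}{1+\iota}\right)=\nfrac{a_0^4-m}{4}$ must be integral, and the only odd fourth power mod $4\io{K}$ is $1$, so $\delta_1=1+\iota$ forces $m\equiv1\pmod{4}$; the converse is the explicit integrality of $\nfrac{1+\alpha}{1+\iota}$. Finally, note that the paper's proof of this proposition uses no ramification-group or Herbrand-function machinery at all — that apparatus is reserved for computing $\nu_{1+\iota}(\disc(N/K))$ in the ramified cases of the main theorem — so the heavy local analysis you propose for \ref{II} is replaceable by the two trace computations above.
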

\begin{proof}\begin{enumerate}[(1)]
  \item If \(\pi\) divides \(d_1\), \(\pi^{12}\) divides \(\disc(\alpha)\) from property \(\ref{basis5}\) of \ref{main-theorem}. Also, \(\pi\) can divide only one of \(f\), \(g\) or \(h\)  since \(f\), \(g\) and \(h\) are pairwise coprime. Since \(f\), \(g\) and \(h\) are square free \(\pi^{12}\) can't divide \(f^3\), \(g^6\) or \(h^9\).

    If \((1+\iota)^2\mid d_1\), \((1+\iota)^{24}\) divides the RHS of \ref{main-equation}. If \(1+\iota\nmid h\), the maximum power of \(1+\iota\) that divides the RHS of \ref{main-equation} is 22. (This will happen when \(1+\iota\mid g\).) 
\item If \(\pi\mid fh\), from the computation of the disriminants in \ref{quadratic-basis} \(\pi\) is ramified in \(M/K\) and hence ramified in \(N/K\).  Let \(\pi\) divide \(g\). We have
 \begin{equation}\label{eq-4}\left(\alpha^4\right)=\left(fg^2h^3\right)\subset \left(\pi\right)\end{equation}
 in \(N\).  If \(\pi\) is unramified in \(N/K\), \(f=1\) and \(r=4\) then 
 \(\left(\pi\right)=Q_1Q_2Q_3Q_4\). If \(r=2\), \(f=2\) we have 
 \(\left(\pi\right)=Q_1Q_2\)   and \((\pi)\) is inert if \(r=1\), \(f=4\).
 
 In the first case, from \ref{eq-4} it follows that 
 \((\alpha^4)=Q_1Q_2Q_3Q_4J\) for some ideal \(J\) in \(\io{N}\).  Hence,  we have   \(\alpha^4\in Q_i\), \(1\leq i\leq 4\). It follows that \(\alpha\in Q_i\) for \(1\leq i\leq 4\). Therefore, \((\alpha)=Q_1Q_2Q_3Q_4J'\) for some ideal \(J'\) in \(\io{N}\). So, \(N_{N/K}\left(Q_1Q_2Q_3Q_4\right)\mid \left(N_{N/K}((\alpha))\right)\), i.e. \(\pi^4\mid N_{M/K}(\alpha)\). Therefore, \(\pi^{12}\mid N_{M/K}\left(\alpha^3\right) \). If \(\pi\) is odd, this contradicts the fact that \(g^6\) divides \(\disc(\alpha)\) and no higher power of \(g\) divides \(\disc(\alpha)\).  
 
 If \(\pi=(1+i)\), the same argument gives that  \(\pi^{28}\mid N_{N/K}(4\alpha^3)\) since \(\pi^{16}\mid N_{N/K}(4)(=4^{4})\) and \(\pi^{12}\mid N_{N/K}(\alpha^3)\).   This is again a contradiction becasue the power of \(\) dividing \(\disc(\alpha)\) is \(22\), 16 from \((1+\iota)^{16}\) and 6 from \(g^6\).  The proofs  for the cases \(r=2\), \(f=2\) and \(r=1\), \(f=4\) are  similar.
\item  From \ref{I}, we know that \(\pi\) is ramified in \(N/K\).  So, the ramification group of \(N/K\) is nontrivial. Suppose \(\pi\mid f\) or \(h\). If the ramification group of \(\pi\) has order \(2\), the fixed field \(M'\) of the ramification group is a subfield of \(N/K\) in which \(\pi\) is unramified.  However, the galois group of \(N/K\) is cylic and \(M/K\) is the unique quadratic sub-extension of \(N/K\) and \(\pi\) is ramified in \(M/K\).  So, if follows that the ramification index of \(\pi\) in \(N/K\) is four. If \(D\) is the different of \(N/K\) and \(Q\) in \(N\) is such that \(Q^4=(\pi)\), it follows from \ref{tame-ramification} that \(\nu_{Q}(D)=3\).  So, 
\(\nu_\pi(\disc(M/K))=v_\pi\left(N_{M/K}(D)\right)=\nu_Q(D)=3\).  

If \(\pi\mid g\), then \(\pi\) is unramified in \(M/K\) since we know from \ref{quadratic-basis} that \(\pi\)  doesn't divide \(\disc(M/K)\). So, \(\pi\) is ramified only in \(N/M\) and \(e=2\) in this case.  So, if \(Q\) is such that \(Q\cap \io{K}=(\pi)\), \(\nu_Q(D)=1\).  If \(f\left(Q\mid (\pi)\right)=2\), from the relation \(f\nu_{Q}(D)=\nu_\pi(N_{N/K}(D))=\nu_\pi(\disc(M/K))\) it follows that \(\nu_{\pi}(\disc{N/K})=2\).  If \(f=1\), there will be two primes \(Q_1\) and \(Q_2\) in \(N\) such that \(Q_1\cap \io{K}=\pi\), \(Q_2\cap \io{K}=\pi\). So, again, \(\nu_\pi(\disc(N/K))=2\).
\item 	By looking at their norm and trace over \(M\), we see that \(\nfrac{\alpha^2}{gh}\) and \(\nfrac{\alpha^3}{gh^2}\) are in \(\io{N}\). It follows from property \ref{basis2} of the \ref{main-theorem} that \(gh\mid d_2\) and \(gh^2\mid d_3\).  
  Let \(\pi\neq 1+\iota\), \(\pi\mid h\).  From property \ref{p-1} of \ref{prop-3}, it follows that \(\nu_\pi(\disc(N/K))=3\). Comparing the LHS and the RHS of  \ref{main-equation}, we get
  \(
    2\nu_\pi\left(d_2d_3\right)+3=9
    \) or \(\nu_\pi\left(d_2\right)+\nu_\pi\left(d_3\right)=3\). Since \(h\mid d_2\) and \(h^2\mid d_3\),  we have \(\nu_\pi\left(d_2\right)=1\), \(\nu_\pi\left(d_3\right)=2\). So,  \(\pi^2\nmid d_2\), \(\pi^3\nmid d_3\). Similarly, we can show that, if  \(\pi\mid g\), from the fact that \(\nu_\pi(\disc(\alpha))=6\) and \(\nu_\pi(\disc(N/K))=2\),  \(\pi^2\nmid d_2\) and \(\pi^3\nmid d_3\).
\item We have $\delta_{1}^{12}\vert \disc(\alpha)$. If \(\delta_1=2\), from \ref{main-equation} it follows that \((1+\iota)^{24}\mid disc(\alpha)\). However the highest power of \(1+\iota\) dividing \(disc(\alpha)\) is \(21\), 12 from \((1+\iota)^{12}\) and  nine from \(g^9\).  

If \(\alpha_2\) is an integer,   we have $N_{N/K}(\alpha_{2})=\nfrac{(a_0^4-m)}{4}\in \io{K}$. Since the only odd fourth power $\pmod{4\io{K}}$ is 1, the result  follows.

Conversely suppose suppose \(m\equiv 1\pmod{4\io{K}}\). Consider \(\alpha_2=\nfrac{1+\alpha}{1+i}\in N\).  To prove that \(\alpha_2\in\io{N}\), we need to prove that \(N_{N/M}\left(\alpha_2\right)\) and \(Tr_{N/M}\left(\alpha_2\right)\) are integers.  We have \(Tr_{N/M}\left(\alpha_2\right)=\frac{2}{1+\iota}=1+\iota\in\io{M}\).  Also, \(N_{N/M}\left(\alpha_2\right)=\nfrac{1-\alpha^2}{2}\).  We have \(N_{M/K}\left(\nfrac{1-\alpha^2}{2}\right)=\frac{1-m}{4}\in \io{K}\) and \(Tr_{M/K}\left(\nfrac{1-\alpha^2}{2}\right)=1\in \io{K}\).  So, it follows that \(\nfrac{1-\alpha^2}{2}\in \io{M}\).

  \item Suppose \(m\) is odd. We have 
\[\alpha\beta_3=\alpha\left(\nfrac{c_0+c_1\alpha+c_2\alpha^2+\alpha^3}{\delta_3}\right)=\nfrac{c_0\alpha+c_1\alpha^2+c_2\alpha^3+m}{\delta_3}\in \io{N}\] Therefore, \begin{equation}Tr_{N/K}\left(\nfrac{c_0\alpha+c_1\alpha^2+c_2 \alpha^3+m}{\delta_3}\right)=\frac{4m}{\delta_3}\in \io{K}\label{eq-7}\end{equation}  Since \(m\) is odd, it follows that \(\delta_3\mid 4\).

Suppose \(\delta_3=4\). Then, arguing as above we get  
\[Tr_{N/K}\left(\nfrac{c_0\alpha+c_1\alpha^2+c_2\alpha^3+m}{4}\right)=m\in \io{K}\]  Since \(m\) is odd and \((m)\subset  Tr_{N/K}\left(\io{N}\right)\) it follows that \(1+\iota\) does not divide \(Tr_{N/K}(\io{N})\).  From \ref{tame-ramification}, it follows that \(1+\iota\) is unramified in \(N/K\).

Conversely, suppose  \((1+\iota)\) is unramified. By property \ref{I}, it follows that \(1+\iota\) doesn't divide \(f\), \(g\) or \(h\) and hence \(m\) is odd. From property \ref{III}, it follows that \(\nu_{1+\iota}(\delta_1)\leq 1\). Suppose \(\delta_3\mid 2(1+\iota)\), i.e. \(\nu_{1+\iota}(\delta_3)\leq 3\).   From \ref{main-equation}. we get \(\nu_{1+\iota}\left(\delta_1\delta_2\delta_3\right)=8\) since \(1+\iota\) doesn't divide \(\disc(N/K)\). We have \(\nu_{1+i}(\delta_2)\leq 3\) since \(\delta_2\mid \delta_3\). So, \(\nu_{1+\iota}(\delta_2\delta_2\delta_3)\leq 7\) which is a contradiction.  So, \(\delta_3=4\).
\end{enumerate}
\end{proof}
\begin{theorem}\label{integral-basis}
	An integral basis of $N$ over $K$ is as in \ref{main-table}.
	
	\begin{table}[htb]
		\noindent\centering
		\begin{tabular}{llc}
			\hline
			\multicolumn{1}{c}{Condition} & Integral Basis&\(\Delta\) \\
			\hline
			\rule[-0.4cm]{0pt}{1cm}	$m \equiv 1 \pmod8$ 
			&\(\left\{ {1},   \nfrac{1+\alpha}{1+\iota}, 
			{\nfrac{{|gh|^2(\iota+(1+\iota)\alpha)}+{\alpha^2}}{2(1+\iota)gh}}, 
			\nfrac{{|gh^2|}^2(1+\alpha+{\alpha^2})+{\alpha^3}} {4g{h^2}}\right\}\)&\\
			\hline
			\rule[-0.4cm]{0pt}{1cm}		$m \equiv {1+4\iota}\pmod8$
			& $\left\{ 1,   \nfrac{1+\alpha}{1+\iota}, 
			{\nfrac{{|gh|^2(-\iota+(1+\iota)\alpha)}+{\alpha^2}}{2(1+\iota)gh}}, 
			\nfrac{{|gh^2|}^2(2-\iota+\alpha+\iota{\alpha^2})+{\alpha^3}} {4g{h^2}}\right\}$&\\
			\hline 
			
			\hspace*{-2mm}$\begin{array}{l}
				{m\equiv 2\iota \pmod4}\\
				fh \equiv 1\pmod4 
			\end{array}$
			& $\left\{ 1,   \alpha, 
			\nfrac{gh+\alpha^2}{2gh}, 
			\nfrac{\alpha+\alpha^3/gh^2}{2}\right\}$&\\
			\hline
			
			\hspace*{-2mm}$\begin{array}{l}
				{m\equiv 2\iota \pmod4}\\
				fh \equiv -1\pmod4 
			\end{array}$ 
			& $\left\{ 1,   \alpha, 
			\nfrac{\iota gh+\alpha^2}{2gh}, 
			\nfrac{\iota\alpha+\alpha^3/gh^2}{2}\right\}$&\\
			\hline
			
			\hspace*{-2mm}$\begin{array}{l}
				{m\equiv 2\iota \pmod4}\\
				fh \equiv \pm1\pmod{2(1+\iota)}\\ 
				f\bar{h}\equiv 1 \pmod{2(1+\iota)}
			\end{array}$
			& $\left\{1, \alpha, 
			\nfrac{gh+\alpha^2}{(1+\iota)gh}, 
			\nfrac{\alpha+\alpha^3/gh^2}{2}\right\}$&\\
			\hline
			
			\hspace*{-2mm}$\begin{array}{l}
				{m\equiv 2\iota \pmod4}\\
				fh \equiv \pm1\pmod{2(1+\iota)}\\ 
				f\bar{h}\equiv -1\pmod{2(1+\iota)}
			\end{array}$
			& $\left\{ 1,   \alpha, 
			\nfrac{gh+\alpha^2}{(1+\iota)gh}, 
			\nfrac{\iota\alpha+\alpha^3/gh^2}{2}\right\}$&\\
			\hline
			\rule[-0.4cm]{0pt}{1cm}$m \equiv3+2\iota \pmod4$ 
			& $\left\{ 1, \alpha, \nfrac{|gh|^2+\alpha^2}{(1+\iota)gh}, 
			\nfrac{{|gh^2|^2(1+\alpha+\alpha^2)}+\alpha^3}{2gh^2}\right\}$&\\
			\hline
			\rule[-0.4cm]{0pt}{1cm}		$m \equiv 1+2\iota \pmod4$ 
			& $\left\{ 1,   \alpha,\nfrac{|gh|^2(1+(1+\iota)\alpha)+\alpha^2}{2gh}, 
			\nfrac{|gh^2|^2(\alpha+(1+\iota)\alpha^2)+\alpha^3}{2gh^2}\right\}$&\\
			\hline
			\rule[-0.4cm]{0pt}{1cm}		$m \equiv 3 \pmod4$ 
			& $\left\{ 1,  \alpha, 
			\nfrac{\iota|gh|^2+\alpha^2}{2gh}, 
			\nfrac{{|gh^2|^2}(\iota+\iota\alpha+\alpha^2)+\alpha^{3}}{2gh^2}\right\}$&\\
			\hline
			
			\hspace*{-2mm}$\begin{array}{l}
				{m \equiv 5 \pmod8}\\
				\mbox{or}\\
				{m \equiv 5+4\iota \pmod8} 
			\end{array}$
			& $\left\{ 1,  \nfrac{1+\alpha}{1+\iota}, 
			\nfrac{|gh|^2+\alpha^2}{2gh}, 
			\nfrac{{|gh^2|}^2(1+\alpha+{\alpha^2})+{\alpha^3}} {2(1+\iota)gh^2}\right\}$&\\
			\hline
			
			\hspace*{-2mm}$\begin{array}{l}
				f~\mbox{is even~~or}
				~~\mbox{h is even}\\ \mbox{or}\\
				{m \equiv \iota\pmod{2}} 
			\end{array}$
			& $\left\{ 1,    \alpha, 
			\nfrac{\alpha^2}{gh}, \nfrac{\alpha^3}{gh^2}\right\}\)&\\ \hline
			\rule[-0.4cm]{0pt}{1cm}$\begin{array}{l}fh\equiv\iota\pmod{2}\\ \mbox{and}~g\mbox{ is even} \end{array}$& $\left\{1 ,  \alpha,
			\nfrac{\alpha^{2}}{gh} \nfrac{\iota\alpha+\alpha^{3}/gh^{2}}{1+\iota}\right\}$&\\
			\hline
		\end{tabular}
		\caption{Integral Bases}\label{main-table}
	\end{table}
\end{theorem}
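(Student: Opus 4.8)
The plan is to pin down the three denominators $d_1,d_2,d_3$ appearing in the normalised basis \ref{normalised-integral-basis} case by case, and then, for each case, to exhibit the explicit monic numerators of \ref{main-table} and verify that the resulting quadruple is indeed an integral basis. Write $d_i=\mu_i\delta_i$ with $\mu_i$ odd and $\delta_i$ a power of $1+\iota$. Proposition \ref{prop-3} already settles most of the ``odd'' bookkeeping: item \ref{p-2} gives $\mu_1=1$, while item \ref{p-0} (together with the integrality of $\alpha^2/gh$ and $\alpha^3/gh^2$ established there) shows that $\mu_2$ is the odd part of $gh$ and $\mu_3$ the odd part of $gh^2$. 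Feeding the local discriminant exponents of \ref{prop-3}, item \ref{p-1}, into the global identity \ref{main-equation}, the only quantity still unknown is $a:=\nu_{1+\iota}(\disc(N/K))$; once $a$ is known, \ref{main-equation} determines $\nu_{1+\iota}(\delta_1\delta_2\delta_3)$, and the further constraints $\delta_1\mid 1+\iota$ (item \ref{III} of \ref{prop-3}), $\delta_1^2\mid\delta_2$ and $\delta_1\delta_2\mid\delta_3$ (property \ref{basis1} of \ref{main-theorem}) cut the number of admissible triples $(\delta_1,\delta_2,\delta_3)$ down to very few.

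So the heart of the matter is the behaviour of the prime $1+\iota$, which I would split into two regimes. If $1+\iota\mid m$, i.e.\ $1+\iota$ divides one of $f,g,h$, then $1+\iota$ ramifies by item \ref{I} of \ref{prop-3}, and a local computation over $\Q_2(\iota)$ --- distinguishing whether $1+\iota$ divides $f$, $g$ or $h$, and computing $\nu(D)$ for the different $D$ via \ref{different-formula} --- fixes $a$; this is what produces the ``$m\equiv 2\iota\pmod 4$'' rows and the last two rows of \ref{main-table}. If $m$ is odd, I would first read off from Table \ref{quadratic-basis}, and from items \ref{III} and \ref{II} of \ref{prop-3}, the behaviour of $1+\iota$ in $M/K$ and whether $1+\iota$ is unramified in $N/K$ (which by item \ref{II} happens exactly when $\delta_3=4$). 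In the remaining, wildly ramified, sub-cases a prime above $1+\iota$ gives a totally ramified cyclic extension of $\Q_2(\iota)$ of degree $2$ or $4$, whose base has absolute ramification index $e'=2$; here I would invoke Wyman's theorem (\ref{wyman}, valid once $b^1>2$), convert the upper break numbers to lower ones via \ref{eqn:10-a}--\ref{eqn:10-c}, and then compute $\nu(D)$ from \ref{different-formula} to obtain $a$. Since the residue of $m$ modulo $8$ (and modulo $2(1+\iota)$) is exactly what determines the first break number, the case division of \ref{main-table} along $m\bmod 8$ and $m\bmod 4$ is precisely the partition into distinct break data.

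With $d_1,d_2,d_3$ known in every case, the last step is to certify the explicit numerators of \ref{main-table}. For a candidate basis $\{1,\beta_2,\beta_3,\beta_4\}$ I would check $\beta_2\in\io{N}$ directly and $\beta_3,\beta_4\in\io{N}$ by a two-step descent: compute $Tr_{N/M}(\beta_j)$ and $N_{N/M}(\beta_j)$ using the computational Lemma (formulas \ref{beta-3-tr}--\ref{beta-4-nr}), then verify that these lie in $\io{M}$ by computing their trace and norm down to $K$ and comparing with the quadratic integral basis of Table \ref{quadratic-basis}. Since each $\beta_i$ is a monic polynomial of degree $i-1$ in $\alpha$ over precisely the denominator $d_i$ shown to occur, property \ref{basis3} of \ref{main-theorem} allows replacing the $i$-th member of \ref{normalised-integral-basis} by $\beta_i$, so $\{1,\beta_2,\beta_3,\beta_4\}$ is an integral basis; the discriminant $\Delta=\disc(N/K)$ is then read off from \ref{main-equation}.

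The step I expect to be the genuine obstacle is the wild ramification analysis at $1+\iota$ when $m$ is odd: correctly extracting the first (upper) break number of the local extension from $m\bmod 8$, pushing it through Wyman's theorem and the Herbrand transition to get all the lower breaks and hence $\nu(D)$, and then carrying this out consistently over all the sub-cases so that the upper bounds from \ref{main-equation} match exactly the integral elements exhibited. By contrast the norm and trace verifications, though numerous, are routine.
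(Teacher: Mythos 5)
Your proposal is correct and follows essentially the same route as the paper: use \ref{prop_1} and \ref{prop-3} to fix the odd parts of $d_1,d_2,d_3$ and the odd part of $\disc(N/K)$, compute $\nu_{1+\iota}(\disc(N/K))$ case by case via ramification groups, the different formula \ref{different-formula} and Wyman's theorem \ref{wyman}, feed this into \ref{main-equation} together with the divisibility constraints of \ref{main-theorem} to pin down $\delta_1,\delta_2,\delta_3$, and finally certify the explicit numerators by norm/trace descent through $M$ and invoke property \ref{basis3}. The only cosmetic difference is that you organise the case split by whether $1+\iota\mid m$ rather than by whether $1+\iota$ ramifies in $N/K$, which amounts to the same partition after your further subdivision of the odd-$m$ case.
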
\vspace{3mm}
We divide the proof into two cases, \(N/K\) is ramified at \(1+\iota\) and \(N/K\) is unramified at \(1+\iota\).
\begin{proposition} The extension \(N/K\) is unramified at \(1+\iota\) iff \(m\equiv 1\pmod{8}\) or \(m\equiv 1+4\iota\pmod{8}\). If \(m\equiv 1\pmod{8}\) an integral basis is  
	\[
	\left\{ {1},   \nfrac{1+\alpha}{1+\iota}, 
	{\nfrac{{|gh|^2(\iota+(1+\iota)\alpha)}+{\alpha^2}}{2(1+\iota)gh}}, 
	\nfrac{{|gh^2|}^2(1+\alpha+{\alpha^2})+{\alpha^3}} {4g{h^2}}\right\}
	\]
	If \(m\equiv 1+4\iota \pmod{8}\), an integral basis is 
	\[
	\left\{ 1,   \nfrac{1+\alpha}{1+\iota}, 
	{\nfrac{{|gh|^2(-\iota+(1+\iota)\alpha)}+{\alpha^2}}{2(1+\iota)gh}}, 
	\nfrac{{|gh^2|}^2(2-\iota+\alpha+\iota{\alpha^2})+{\alpha^3}} {4g{h^2}}\right\}
	\]Further, the discriminant of \(N/K\) is \(f^3g^2h^3\).
	\end{proposition}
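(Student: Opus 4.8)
The plan is to split the proof into the two implications announced in the statement, and then, in the ``unramified'' direction, to build the integral basis piece by piece using the constraints on $d_1,d_2,d_3$ already collected in \ref{prop-3}, together with the quotient behaviour of the ramification filtration. For the forward direction, suppose $(1+\iota)$ is unramified in $N/K$. By \ref{prop-3}, \ref{II}, this forces $m$ odd and $\delta_3=4$; by \ref{prop-3}, \ref{III}, we get $\delta_1=1+\iota$, which by that same item is equivalent to $m\equiv 1\pmod{4\io{K}}$. So we are reduced to deciding, among the residues $m\equiv 1\pmod{4}$, which ones actually give an unramified $(1+\iota)$; the claim is precisely $m\equiv 1$ or $1+4\iota\pmod 8$. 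One direction is immediate: if $m\equiv 1\pmod 8$ then $\nfrac{1+\alpha}{1+\iota}$ is an integer (its trace and norm down to $M$ are, as in the proof of \ref{prop-3}, \ref{III}, using $m\equiv 1\pmod 8$ to kill the relevant power of two in $\nfrac{1-\alpha^2}{2}$ — actually one needs $m\equiv 1\pmod 8$ so that $\nfrac{1-m}{4}$ is even, giving $\nfrac{1-\alpha^2}{2}\in\io{M}$ refined to make $\nfrac{1+\alpha}{1+\iota}$ integral), and a parallel computation handles $m\equiv 1+4\iota\pmod 8$ with the numerator $\iota+\alpha$ or its variant. Conversely, if $m\equiv 5$ or $5+4\iota\pmod 8$, one shows $(1+\iota)$ must ramify: here $\delta_1=1+\iota$ still but $\delta_3\neq 4$, and comparing $\nu_{1+\iota}$ on both sides of \ref{main-equation} with $\nu_{1+\iota}(\disc(N/K))\geq 1$ gives the ramification, exactly the argument in the last paragraph of the proof of \ref{prop-3}, \ref{II}, read in reverse.

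Next I would determine the even parts $\delta_2,\delta_3$ under the hypothesis $m\equiv 1$ or $1+4\iota\pmod 8$. We already know $\delta_1=1+\iota$, $\delta_3=4$, and $\delta_2\mid\delta_3$, $\delta_1\delta_2\mid\delta_3$ (property \ref{basis1} of \ref{main-theorem}), so $\delta_2\mid\nfrac{4}{1+\iota}=2(1-\iota)$, i.e. $\nu_{1+\iota}(\delta_2)\leq 3$; on the other hand $\nu_{1+\iota}(\delta_1\delta_2\delta_3)=\nu_{1+\iota}\bigl((1+\iota)^{16}f^3g^6h^9\bigr)/2$ minus $\nu_{1+\iota}(\disc(N/K))/2$, and since $(1+\iota)$ is unramified $\nu_{1+\iota}(\disc(N/K))=0$, the power of $1+\iota$ in $fgh$ is zero, so $\nu_{1+\iota}(\delta_1\delta_2\delta_3)=8$, forcing $\nu_{1+\iota}(\delta_2)=3$, i.e. $\delta_2=2(1+\iota)$ up to a unit. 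Combining with \ref{prop-3}, \ref{p-0}, which pins the odd parts of $d_2,d_3$ to be those of $gh$, $gh^2$ respectively, we conclude $d_2$ and $d_3$ equal $2(1+\iota)gh$ and $4gh^2$ up to units — which is exactly what the denominators in the asserted basis are.

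It remains to exhibit the explicit numerators, i.e. to find $a_0,b_0,b_1,c_0,c_1,c_2\in\io{K}$ so that the three displayed elements are algebraic integers; by \ref{main-theorem} any three such elements with the correct (monic, of degrees $1,2,3$) numerators and these denominators automatically form an integral basis, since the product of squared denominators times $\disc(N/K)$ then matches $\disc(\alpha)$ by a $\nu_\pi$-count at every prime. So the work is purely to verify integrality of the candidate second and third basis elements; for this I would compute $Tr_{N/M}$ and $N_{N/M}$ (using the formulas in the computational Lemma, with $\delta_2,\delta_3$ in place of $d_2,d_3$) and then $Tr_{M/K}$, $N_{M/K}$ of the results, reducing everything to checking a handful of congruences modulo powers of $1+\iota$ that hold precisely because $m\equiv 1$ (resp. $1+4\iota$) $\pmod 8$ and because $|gh|^2=g\bar g h\bar h$, $|gh^2|^2=g\bar g h^2\bar h^2$ absorb the odd denominators. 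The case $m\equiv 1+4\iota$ differs from $m\equiv 1$ only in the choice of the integer coefficients (the ``$\iota$'' twists and the ``$2-\iota$'' in the last numerator), and is handled by the same computation with $m$ replaced by its residue.

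The main obstacle is the last step: there is no conceptual difficulty, but the integrality verifications for the degree-$3$ element with denominator $4gh^2$ are delicate two-adic congruence computations, and getting the coefficients $c_0,c_1,c_2$ exactly right (so that both $N_{N/M}$ \emph{and} $Tr_{N/M}$, and then both their norm and trace to $K$, land in $\io{K}$) is where all the care goes — this is why the statement distinguishes $m\equiv 1$ from $m\equiv 1+4\iota\pmod 8$, the two sub-cases needing genuinely different numerators. Everything else (the denominators, the discriminant $f^3g^2h^3$, which follows by dividing $(1+\iota)^{16}f^3g^6h^9$ by $(d_1d_2d_3)^2=(1+\iota)^{16}g^4h^6$ up to units) is bookkeeping with \ref{main-equation} and \ref{prop-3}.
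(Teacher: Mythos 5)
Your converse direction — given \(m\equiv 1\) or \(1+4\iota\pmod{8}\), exhibit the candidate elements, verify integrality by norm/trace descent through \(M\), and then squeeze the \(\delta_i\) using \(\delta_3\mid 4\), \(\delta_1\delta_2\mid\delta_3\) and \ref{main-equation} — is essentially the paper's argument, modulo one presentational circularity noted below. The genuine gap is in the forward direction. To prove that unramifiedness at \(1+\iota\) forces \(m\equiv 1\) or \(1+4\iota\pmod{8}\) you pass to the contrapositive and assert that for \(m\equiv 5\) or \(5+4\iota\pmod{8}\) one has ``\(\delta_1=1+\iota\) still but \(\delta_3\neq 4\)''. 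But by property \ref{II} of \ref{prop-3}, \(\delta_3\neq 4\) is (for odd \(m\)) \emph{equivalent} to ramification at \(1+\iota\); asserting it is asserting the conclusion, and it cannot be extracted by reading the proof of that property ``in reverse''. The paper closes exactly this gap by a computation absent from your proposal: assuming \(1+\iota\) unramified, it shows \(\nu_{1+\iota}(b_1)=1\) exactly (if \(2\mid b_1\), integrality of \(Tr_{M/K}(\alpha^2N_{N/M}(\beta_3))=\nfrac{m(2b_0-b_1^2)}{\delta_2^2}\) forces \(2\mid b_0\), whence \(1+\iota\mid\alpha^2\) and \(1+\iota\mid m\), a contradiction), deduces \(\nu_{1+\iota}(\delta_2)=3\) and \(\delta_1=1+\iota\), pins down \(b_0\equiv\pm\iota\pmod{4}\) and \(b_1\equiv 1\pm\iota\pmod{2(1+\iota)}\), and only then reads the congruence on \(m\) modulo \(8\) off the integrality of \(N_{N/M}(\beta_3)=\nfrac{b_0^2+m+(2b_0-b_1^2)\alpha^2}{8}\). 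Some argument of this kind is indispensable.

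A secondary issue: at the start of the forward direction you invoke property \ref{III} of \ref{prop-3} to conclude \(\delta_1=1+\iota\), but that property only says \(\delta_1=1+\iota\) iff \(m\equiv 1\pmod{4}\), which you do not yet know; and your subsequent valuation count \(\nu_{1+\iota}(\delta_1)+\nu_{1+\iota}(\delta_2)=4\) with \(\nu_{1+\iota}(\delta_1)\leq 1\) does not by itself exclude the alternative \((\nu_{1+\iota}(\delta_1),\nu_{1+\iota}(\delta_2))=(0,4)\), which is also consistent with \(\delta_1\delta_2\mid\delta_3=4\). Ruling that case out is again exactly what the \(b_1\)-argument above accomplishes. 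In the converse direction the corresponding step is sound once reordered: integrality of the degree-two element gives \(2(1+\iota)\mid\delta_2\), hence \(4\mid\delta_1\delta_2\mid\delta_3\mid 4\), so \(\delta_3=4\), unramifiedness follows from property \ref{II} of \ref{prop-3}, and only then does \ref{main-equation} return \(\delta_2=2(1+\iota)\) exactly and \(\disc(N/K)=f^3g^2h^3\); your phrasing, which appeals to a \(\nu_\pi\)-count against \(\disc(N/K)\) before unramifiedness is established, should be rearranged accordingly.
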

\begin{proof}
		Consider the extension $N/K$. From \ref{prop-3}, property \ref{p-0} and property \ref{p-2}, we know the odd parts of \(d_1\), \(d_2\) and \(d_3\). So, we need to determine  $\delta_{1},\delta_{2},\delta_{3}$, the even parts of $d_{1},d_{2},d_{3}$.
	
	Since $N/K$ is unramified at \(1+\iota\), from \ref{prop-3}, property \ref{I}, $m$ is odd. From \ref{prop-3}, property \ref{II}, we also have \(\delta_3=4\).
	
	If \(\nu_{1+\iota}(\delta_2)<3\), since \(\nu_{1+\iota}(\delta_1)\leq 1\)
	we get \(\nu_{1+\iota}(\delta_1\delta_2\delta_3)\leq 7\) which contradicts \ref{main-equation}.  So, \begin{equation}\label{eq-8}\nu_{1+\iota}(\delta_2)\geq 3\end{equation}
	
	We have 
	\[
	gh\left(\sigma^{2}(\alpha_3)-\alpha_3\right)=\frac{2b_1}{\delta_2}\in \io{K}
	\]
	Therefore \(\nu_{1+\iota}(b_1)\geq \nu_{1+\iota}(\delta_2)-\nu_{1+\iota}(2)\geq 1\).  We claim that \(\nu_{1+\iota}(b_1)=1\).  Assume for contradiction that \(2\mid b_1\). Considering
	\begin{equation}\label{eq-9}
		Tr_{M/K}(\alpha^{2}N_{N/M}(\beta_{3}))=\frac{m(2b_0-b_1^{2})}
		{\delta_{2}^{2}}\in \io{K}
	\end{equation} 
	since \(\nu_{1+\iota}\left(\delta_3\right)\geq 3\), we get \(2b_0-b_1^2\equiv 0\pmod{4}\).  It follows that \(b_0\equiv 0\pmod{2}\).  From \(Tr_{N/M}(\alpha_2)=2\left(\frac{b_0+\alpha^2}{\delta_2}\right)\in\io{M}\)  
	it follows that \(1+\iota \mid \alpha^2\). So, \(\alpha^2\equiv 0\pmod{1+
		\iota}\) and therefore \(1+\iota \mid m\).  So, by \ref{prop-3}, \vref{I}, it follows that 
	\(1+\iota\) is ramified in \(N/K\) which is  a
	contradiction to our assumption that $N/K$ is unramified at $1+\iota$.  Therefore, \(\nu_{1+\iota}(b_1)=1\).
	
	Since \(\nu_{1+\iota}(b_1)=1\),  from \(\nu_{1+\iota}\left(\nfrac{2b_1}{\delta_2}\right)\geq 0\) it follows that 
	\(\nu_{1+\iota}(\delta_2)\leq 3\). So, from \ref{eq-8}, it follows that \(\nu_{1+\iota}(\delta_2)=3\).  Since \(\delta_3=4\), from \ref{main-equation}, it follows that \(\delta_1=1+\iota\). From \ref{prop-3}, property \ref{III}, it follows that \(m\equiv 1\mod{4}\).  
	
	We now determine the possible values of \(b_0\) and \(b_1\).  Note that, these are determined only modulo \(2(1+\iota)\). We have $\delta_2=2(1+\iota)$ and \(1+\iota \nmid m\) since \(m\equiv 1\pmod{4}\). So, from \ref{eq-9} it follows that
	$2b_0-b_1^{2}\equiv 0\pmod{8}$. Also, \(b_1=a(1+i)\) with \(a\) odd. So, \(b_1^2=2\iota a^2\equiv \pm 2i\pmod{8}\) since \(a^2\equiv \pm 1,\pm 3+4\iota\pmod{8}\). This implies that \(2b_0\equiv \pm 2i\pmod{8}\) or 
	\(b_0\equiv \pm \iota \pmod{4}\). Also \(b_1^2 \equiv  2\iota\pmod{2(1+\iota)}\), so \(b_1\equiv 1\pm \iota\pmod{2(1+\iota)}\).
	So we can take the value of $b_0$ and $b_1$ to be by $\pm
	\iota$ and $1\pm \iota$ respectively.  
	
	We show next that \(m\equiv 1\pmod{8}\) or \(m\equiv 1+4\iota\pmod{8}\) if \(N/M\) is unramified at \(1+\iota\). We have $\alpha_3={\displaystyle 
		b_0+b_1\alpha+\alpha^{2}\over
		{\displaystyle d_{2}}}={\displaystyle 
		b_0+b_1\alpha+\alpha^{2}\over
		{\displaystyle \delta_{2}gh}}\in\io{N}$.  Therefore, \(\beta_3=gh\alpha_3={\displaystyle 
		b_0+b_1\alpha+\alpha^{2}\over
		{\displaystyle \delta_{2}}}\in\io{N}\). 

	Consider
	\[
	N_{N/M}(\beta_3)=\nfrac{b_0^{2}+m+\alpha^{2}(2b_0-b_1^{2})}{8}\in\io{M}
	\]
	
	\noindent Since $b_0=\pm \iota$ and $b_1=1\pm \iota$, $2b_0-b_1^{2}\equiv 0\hbox{ or
	}4\iota\pmod{8}$ and $b_0^{2}+m=m-1$. When $2b_0-b_1^{2}\equiv 0\pmod{8}$,
	$\nfrac{\alpha^2(m-1)}{8}\in \io{M}$ or $m\equiv 1\pmod{8}$. When $2b_0-b_1^{2}\equiv
	4\iota\pmod{8}$, \[\nfrac{m-1+4\iota\alpha^{2}} {8}=\nfrac{m-(1+4\iota)+4\iota(1+\alpha^2)}{8}\in\io{M}\] is an integer. We have  \(\nfrac{4\iota(1+\alpha^2)}{8}=\frac{\iota\left(1+\alpha^2\right)}{2}\) is an integer. (Considering \(N_{M/K}\left(\nfrac{1+\alpha^2}{2}\right)\) and \(Tr_{M/K}\left(\nfrac{1+\alpha^2}{2}\right)\) we can easily check that \(\nfrac{1+\alpha^2}{2}\) is an integer.) So, \(\nfrac{m-1+4\iota}{8}\in \io{M}\) or 
	$m\equiv 1+4\iota\pmod{8}$. So if $N/K$ is unramified at \(1+\iota\), $m\equiv 1\hbox{
		or }1+4\iota\pmod{8}$.
	
	Conversely, if $m\equiv 1\hbox{ or }1+4\iota\pmod{8}$, \(m\) is odd and  $\delta_{1}=1+\iota$.
	If $m\equiv 1\pmod{8}$, we claim that 
	$\nfrac{\iota+(1+\iota)\alpha+\alpha^{2}}{2(1+\iota)}\in\io{N}$.
	Since \(N/M\) is a quadratic extension, it is enough to show that  \[Tr_{N/M}\left(\nfrac{\iota+(1+\iota)\alpha+\alpha^{2}}{2(1+\iota)}\right)\in \io{M}\textrm{ and }N_{N/M}\left(\nfrac{\iota+(1+\iota)\alpha+\alpha^{2}}{2(1+\iota)}\right)\in\io{M}\]
	We have 
	\(Tr_{N/M}\left(\nfrac{\iota+(1+\iota)\alpha+\alpha^{2}}{2(1+\iota)}\right)=\nfrac{\iota +\alpha^2}{1+\iota}\). To show that   
	\(\left(\nfrac{\iota +\alpha^2}{1+\iota}\right)\in\io{M}\) is an integer, we again look at its norm and trace over \(M/K\). Here,
	\[N_{M/K}
	\left(\nfrac{\iota +\alpha^2}{1+\iota}\right)=\nfrac{-1-m}{2i}\in\io{K},Tr_{M/K}\left(\frac{\iota+\alpha^2}{1+\iota}\right)=\frac{2\iota}{1+\iota}\in \io{K}\] since \(m\equiv 1\pmod{4}\).
	
	Using \ref{beta-4-nr} we have \begin{align*}N_{N/M}\left(\nfrac{\iota+(1+\iota)\alpha+\alpha^{2}}{2(1+\iota)}\right)&=\nfrac{(-1+m)+(2\iota -(1+\iota)^2)\alpha^2}{8}\\
		&=\nfrac{m-1}{8}\in \io{M}
	\end{align*}
	So
	$2(1+\iota)\vert \delta_{2}$. Thus,
	$4\vert\delta_{1}\delta_{2}\vert\delta_{3}$ and $\delta_{3}=4$. From \vref{main-equation}, it follows
	that $\delta_{2}=2(1+\iota)$ since \(f\), \(g\) and \(h\) are odd. If $m\equiv 1+4\iota\pmod{8}$, we have \[\frac{-\iota+(1+\iota)\alpha)+\alpha^2}{2(1+\iota)}\in \io{N}\] As before, \(\delta_1=1+\iota\), \(\delta_2=2(1+\iota)\) and \(\delta_3=4\). Again, from \vref{main-equation}, we get \(\disc(N/M)=f^3g^2h^3\) and  $N/K$ is unramified at \(1+\iota\) in this case.
	
	To prove that \[\alpha_3={\nfrac{{|gh|^2(\iota+(1+\iota)\alpha)}+{\alpha^2}}{2(1+\iota)gh}}\]
	is an integer, we need to show that 
	\[
	\nu_Q\left(|gh|^2(\iota+(1+\iota)\alpha)+{\alpha^2}\right)\geq \nu_Q(2(1+\iota)gh)
	\]
	for each prime ideal \(Q\) in \(\io{N}\) dividing \(2(1+\iota)gh\). This is clear for \(Q\mid gh\) once we write the numerator as \(gh\overline{gh}(\iota +(1+\iota)\alpha)+gh\frac{\alpha^2}{gh}\) since \(\pi\neq 1+\iota\).
	
	For \(\Q\) in \(\io{N}\), \(Q\mid 1+\iota\),  since \(\nu_{1+\iota}(gh)=0\) we need to show that 
	\[
	\nu_Q\left(|gh|^2(\iota+(1+\iota)\alpha)+{\alpha^2}\right)\geq \nu_Q(2(1+\iota))
	\] or
	\(\nfrac{|gh|^2(\iota+(1+\iota)\alpha)+{\alpha^2}}{2(1+\iota)}\) is an integer.  
	Since \(\abs{gh}^2\equiv 1\pmod{4}\), it is enough to show that 
	\(\nfrac{(\iota+(1+\iota)\alpha)+{\alpha^2}}{2(1+\iota)}\) is an integer and we are done.
	
	Similarly, we can show that \(\nfrac{\abs{gh}^2\left(-\iota+(1+\iota)\alpha\right)+\alpha^{2}}
	{2(1+\iota)}\) is an integer if \(m\equiv 1+4\iota\pmod{8}\).
	
	\noindent Assume that \(m\equiv 1+4\iota\pmod{8}\). Note that \(\frac{\alpha^3}{gh^2}\in \io{N}\) it satisfies the polynomial 
	\(x^4-f^3g^2h=0\). As before, to show that \[\nfrac{{|gh^2|}^2(2-\iota+\alpha+\iota{\alpha^2})+{\alpha^3}} {4g{h^2}}=\nfrac{gh^2\overline{gh^2}(2-\iota+\alpha+\iota{\alpha^2})+{gh^2\frac{\alpha^3}{gh^2}}} {4g{h^2}}\] is an integer, it is enough to show that \(\nfrac{2-\iota+\alpha+\iota\alpha^2+\alpha^3}{4}\) is an integer.
	We have
	\[
	Tr_{N/M}\left(\nfrac{2-\iota+\alpha+\iota\alpha^2+\alpha^3}{4}\right)=\nfrac{2-\iota+\iota\alpha^2}{2}=\nfrac{2+\iota(\alpha^2-1)}{2}\in \io{M}
	\]	
	because we have already proved that \(\nfrac{\alpha^2-1}{2}\in\io{M}\) if \(m\equiv 1\pmod{4}\).
	
	From \ref{beta-4-nr} we have 
	\begin{align*}
		N_{N/M}\left(\nfrac{2-\iota+\alpha+\iota\alpha^2+\alpha^3}{4}\right)&= \nfrac{\left((2-\iota)^2-m-2m\right)+\left(4\iota +1-m\right)\alpha^2}{16}\\
		&=\nfrac{3-4\iota -3m+\left(4\iota +1 -m\right)\alpha^2}{16}
		\intertext{Writing \(m=1+4\iota+8\mu\) with \(\mu \in \io{K}\), we get }
		N_{N/M}\left(\nfrac{2-\iota+\alpha+\iota\alpha^2+\alpha^3}{4}\right)&=\nfrac{-16\iota-24\mu-8\mu\alpha^2}{16}=\frac{-2\iota-3\mu-\mu\alpha^2}{2}
	\end{align*}
	We have 
	\begin{align*}
		N_{M/K}\left(\frac{-2\iota+3\mu-\mu\alpha^2}{2}\right)&=\nfrac{-4+(9-m)\mu^2+12\iota\mu}{4}\in \io{K} 
		\intertext{since \(9-m=8-4\iota-8\mu\). Also,} Tr_{M/K}\left(\frac{-2\iota+3\mu-\mu\alpha^2}{2}\right)&=-2\iota+3\mu\in \io{K}
	\end{align*}
		\end{proof}
\begin{proposition} 
	If \(N/K\) is ramified at \(1+\iota \), the integral basis is as in \ref{main-table-1}.
	\begin{table}[htb]
		\noindent\centering
		\begin{tabular}{llc}
			\hline
			\multicolumn{1}{c}{Condition} & Integral Basis&\(\Delta\) \\
			\hline
			\hspace*{-2mm}$\begin{array}{l}
				{m\equiv 2\iota \pmod4}\\
				fh \equiv 1\pmod4 
			\end{array}$
			& $\left\{ 1,   \alpha, 
			\nfrac{gh+\alpha^2}{2gh}, 
			\nfrac{gh^2\alpha+\alpha^3}{2gh^2}\right\}$&\\
			\hline
			
			\hspace*{-2mm}$\begin{array}{l}
				{m\equiv 2\iota \pmod4}\\
				fh \equiv -1\pmod4 
			\end{array}$ 
			& $\left\{ 1,   \alpha, 
			\nfrac{\iota gh+\alpha^2}{2gh}, 
			\nfrac{gh^2\iota\alpha+\alpha^3}{2gh^2}\right\}$&\\
			\hline
			
			\hspace*{-2mm}$\begin{array}{l}
				{m\equiv 2\iota \pmod4}\\
				fh \equiv \pm1\pmod{2(1+\iota)}\\ 
				f\bar{h}\equiv 1 \pmod{2(1+\iota)}
			\end{array}$
			& $\left\{1, \alpha, 
			\nfrac{gh+\alpha^2}{(1+\iota)gh}, 
			\nfrac{gh^2\alpha+\alpha^3}{2gh^2}\right\}$&\\
			\hline
			
			\hspace*{-2mm}$\begin{array}{l}
				{m\equiv 2\iota \pmod4}\\
				fh \equiv \pm1\pmod{2(1+\iota)}\\ 
				f\bar{h}\equiv -1\pmod{2(1+\iota)}
			\end{array}$
			& $\left\{ 1,   \alpha, 
			\nfrac{gh+\alpha^2}{(1+\iota)gh}, 
			\nfrac{gh^2\iota\alpha+\alpha^3}{2gh^2}\right\}$&\\
			\hline
			\rule[-0.4cm]{0pt}{1cm}$m \equiv3+2\iota \pmod4$ 
			& $\left\{ 1, \alpha, \nfrac{|gh|^2+\alpha^2}{(1+\iota)gh}, 
			\nfrac{{|gh^2|^2(1+\alpha+\alpha^2)}+\alpha^3}{2gh^2}\right\}$&\\
			\hline
			\rule[-0.4cm]{0pt}{1cm}		$m \equiv 1+2\iota \pmod4$ 
			& $\left\{ 1,   \alpha,\nfrac{|gh|^2(1+(1+\iota)\alpha)+\alpha^2}{2gh}, 
			\nfrac{|gh^2|^2(\alpha+(1+\iota)\alpha^2)+\alpha^3}{2gh^2}\right\}$&\\
			\hline
			\rule[-0.4cm]{0pt}{1cm}		$m \equiv 3 \pmod4$ 
			& $\left\{ 1,  \alpha, 
			\nfrac{\iota|gh|^2+\alpha^2}{2gh}, 
			\nfrac{{|gh^2|^2}(\iota+\iota\alpha+\alpha^2)+\alpha^{3}}{2gh^2}\right\}$&\\
			\hline
			
			\hspace*{-2mm}$\begin{array}{l}
				{m \equiv 5 \pmod8}\\
				\mbox{or}\\
				{m \equiv 5+4\iota \pmod8} 
			\end{array}$
			& $\left\{ 1,  \nfrac{1+\alpha}{1+\iota}, 
			\nfrac{|gh|^2+\alpha^2}{2gh}, 
			\nfrac{{|gh^2|}^2(1+\alpha+{\alpha^2})+{\alpha^3}} {2(1+\iota)gh^2}\right\}$&\\
			\hline
			
			\hspace*{-2mm}$\begin{array}{l}
				f~\mbox{is even~~or}
				~~\mbox{h is even}\\ \mbox{or}\\
				{m \equiv \iota\pmod{2}} 
			\end{array}$
			& $\left\{ 1,    \alpha, 
			\nfrac{\alpha^2}{gh}, \nfrac{\alpha^3}{gh^2}\right\}\)&\\ \hline
			\rule[-0.4cm]{0pt}{1cm}$\begin{array}{l}fh\equiv\iota\pmod{2}\\ \mbox{and}~g\mbox{ is even} \end{array}$& $\left\{1 ,  \alpha,
			\nfrac{\alpha^{2}}{gh}, \nfrac{gh^2\iota\alpha+\alpha^{3}}{(1+\iota)gh^2}\right\}$&\\
			\hline
		\end{tabular}
		\caption{Integral Bases}\label{main-table-1}
	\end{table} 
	\end{proposition}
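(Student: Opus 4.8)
The plan is to run the same machine as in the unramified case just treated. By \ref{prop-3} (parts \ref{p-0} and \ref{p-2}) the odd parts of \(d_1,d_2,d_3\) are already fixed --- they are \(1\), the odd part of \(gh\), and the odd part of \(gh^2\) --- so the entire task is to determine the even parts \(\delta_1\mid\delta_2\mid\delta_3\) and then, for each row of \ref{main-table-1}, to exhibit an algebraic integer of the displayed monic shape having exactly the claimed denominator. The constraints available for the first part are: \(\delta_1^2\mid\delta_2\) and \(\delta_1\delta_2\mid\delta_3\) (from \ref{main-theorem}(\ref{basis1})); \(\delta_1\mid1+\iota\), with \(\delta_1=1+\iota\) iff \(m\equiv1\pmod4\), and \(\delta_3\mid4\) when \(m\) is odd (from \ref{prop-3}, parts \ref{III} and \ref{II}); and, crucially, the discriminant identity \ref{main-equation}, which turns the whole question into a computation of \(\nu_{1+\iota}\bigl(\disc(N/K)\bigr)\).

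To compute \(\nu_{1+\iota}(\disc(N/K))\) I would first pin down the ramification index \(e\) of \(1+\iota\) in \(N/K\). Since \(N/K\) is cyclic of degree \(4\) and \(M\) is its unique quadratic subfield, \(e=4\) exactly when \(1+\iota\) ramifies in \(M/K\), which by Table \ref{quadratic-basis} happens precisely when \(fh\not\equiv\pm1\pmod4\) (the cases \(1+\iota\mid fh\), \(fh\equiv\pm1+2\iota\pmod4\), and \(fh\equiv\iota\pmod2\)); otherwise \(fh\equiv\pm1\pmod4\), \(1+\iota\) is unramified in \(M/K\), and in the ramified case \(e=2\) with \(1+\iota\) wildly totally ramified in \(N/M\). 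When \(e=2\) I would use the tower formula \(\disc(N/K)=N_{M/K}\bigl(\disc(N/M)\bigr)\disc(M/K)^2\): here \(\disc(M/K)=fh\) is odd, so the \(1+\iota\)-part of \(\disc(N/K)\) is that of \(\disc(N/M)\), and \(N=M(\alpha)\) is a quadratic extension whose local discriminant at \(1+\iota\) is handled exactly as in \ref{prop_1}. When \(e=4\) I would localise at \(1+\iota\): put \(L=\Q_2(\iota)\), so \(L(\alpha)/L\) is totally ramified cyclic of degree \(4=2^2\) over a field of absolute ramification index \(e'=2\), and determine the upper break numbers \(b^1\le b^2\). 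By compatibility of the upper numbering with quotients, the first break \(b^1\) equals the break of \(M/K\), which is \(1\) or \(3\) according to whether \(\nu_{1+\iota}(\disc(M/K))\) is \(2\) or \(4\). When \(b^1=3>2=e'/(p-1)\), Proposition \ref{Wyman} gives \(b^2=b^1+e'=5\) outright; when \(b^1=1\) one must locate \(b^2\) by a direct computation (equivalently, by inspecting \(m\) modulo the next power of \(1+\iota\), which is what the extra conditions \(f\bar h\equiv\pm1\pmod{2(1+\iota)}\) and the \(m\bmod 8\) distinctions record). With the breaks in hand, converting to the lower numbering via \ref{eqn:10-a}--\ref{eqn:10-c} and applying \ref{different-formula} (using \ref{serre-exercise} to cut off the filtration) yields \(\nu_{1+\iota}(\disc(N/K))\).

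Given \(\nu_{1+\iota}(\disc(N/K))\), the identity \ref{main-equation} pins down \(\nu_{1+\iota}(\delta_1\delta_2\delta_3)\), and the divisibility chain together with the bounds \(\delta_1\mid1+\iota\), \(\delta_1^2\mid\delta_2\), \(\delta_1\delta_2\mid\delta_3\), \(\delta_3\mid4\) then forces each \(\delta_i\) separately in every row (for instance \(m\equiv3\pmod4\) forces \(\delta_1=1\), after which the computed valuation leaves only \(\delta_2=2gh\), \(\delta_3=2gh^2\)). It remains, row by row, to check that the element written as \(\alpha_3\) (resp.\ \(\alpha_4\)) in Table \ref{main-table-1} is an algebraic integer; this is done by computing \(Tr_{N/M}\) and \(N_{N/M}\) through the formulas \ref{beta-3-tr}--\ref{beta-4-nr} of the Lemma and then verifying that the resulting elements of \(M\) are integral via their norm and trace down to \(K\), exactly as in the proof of the unramified case, the congruence hypothesis of each row being precisely what makes these quantities lie in \(\io{K}\). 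Since the \(d_i\) are thereby determined and the displayed quotients are integers of the correct monic degree, \ref{main-theorem}(\ref{basis3}) concludes that they form an integral basis, and the discriminant column \(\Delta=\disc(N/K)\) is whatever \ref{main-equation} returns.

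The main obstacle is the wild-ramification input in the \(e=4\) cases: computing \(\nu_{1+\iota}(\disc(N/K))\) reduces to finding the second upper break \(b^2\) of \(L(\alpha)/L\), and when the first break is small --- so that Proposition \ref{Wyman} does not apply --- this has to be extracted by hand from the \((1+\iota)\)-adic expansion of \(m\), which is exactly why Table \ref{main-table-1} must split these cases along the finer congruences on \(fh\), \(f\bar h\) and \(m\). Everything downstream --- distributing the valuation among \(\delta_1,\delta_2,\delta_3\) and certifying each displayed basis element --- is then routine, if lengthy, arithmetic entirely parallel to the unramified case.
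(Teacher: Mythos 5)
Your proposal is correct and follows essentially the same route as the paper's proof: odd parts from \ref{prop-3}, the \((1+\iota)\)-valuation of \(\disc(N/K)\) via break numbers of the localized extension (quotient compatibility for the first break, \ref{wyman} or an explicit uniformiser computation for the second, then \ref{different-formula}), the identity \ref{main-equation} to bound \(\delta_1\delta_2\delta_3\), and exhibited integral elements to split the product. Two small inaccuracies do not affect the method: when \(1+\iota\mid fh\) the first break is \(4\) rather than \(1\) or \(3\), and in several rows the valuation of \(\delta_2\delta_3\) alone leaves more than one factorization, so the displayed integers are needed as inputs (via property \ref{basis5} of \ref{main-theorem}) rather than only as a final verification.
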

\begin{proof}

	\textbf{Case 1}
	$N/M$ is ramified and $M/K$ is unramified at \(1+\iota\). Note that $fh\equiv \pm
	1\pmod{4}$ since $M/K$ is unramified at \(1+\iota\). 
	
	\noindent
	\textbf{Case 1(a)} $f,g,h$ are odd.
	
	Since $m$ is odd and $m\equiv\pm 1\pmod{4}$, $m\equiv 5 {\rm ~or~} 5+4\iota\pmod{8}$
	or $m\equiv 3\pmod{4}$. When $m\equiv 5 {\rm ~or~} 5+4\iota\pmod{8}$,
	$\delta_{1}=1+\iota$ and so $2\vert \delta_{2}$ and
	$2(1+\iota)\vert\delta_{3}$. Since $M/K$ is ramified at $1+\iota$, from \ref{prop-3},  \ref{II}, it
	follows that $\delta_{3}\neq 4$ and $\delta_{3}=2(1+\iota)$. As
	$\delta_{1}\delta_{2}\vert \delta_{3}$, $2(1+\iota)\not\vert\delta_{2}$.
	Thus, $\delta_{2}=2$.
	
	Let $m\equiv 3\pmod{4}$. Then $\delta_{1}=1$. We have $N_{N/K}(\alpha-1)=m-1\equiv
	2\pmod{4}$. It follows that \((1+\iota)^2\parallel N_{N/K}(\alpha-1)\). We claim that \(\nu_Q(\alpha-1)=1\) for all primes in \(\io{N}\) with \(Q\cap \io{K}=(1+\iota)\). Primes above $1+\iota$  are \emph{unramified} in \(M \slash K\) and (totally) ramified in \(N/M\).  So, the possibilities are 
	\(f(Q\vert (1+\iota))=1\) or \(2\). 
	If \(f(Q\vert (1+\iota))=1\), there are primes \(Q_1\) and \(Q_2\) in \(N\) dividing \(1+\iota\) such that 
	\((1+\iota)=Q_1^2Q_2^2\). Further, if \(Q_1\vert (\alpha-1)\), \(Q_2=\sigma^2(Q_1)\) divides \(-(\alpha+1)=(\alpha+1)\).  So, \(Q_2\) divides 
	\((\alpha-1)\) since \((\alpha-1)\subset (\alpha+1)+(2)\subset Q_2\). So, \(Q_1\) divides \(\alpha-1\) iff \(Q_2\) divides \(\alpha-1\).  Writing \((\alpha-1)=Q_1^{k_1}Q_2^{k_2}I\), where \(Q_1\) and \(Q_2\) do not divide \(I\), we have \(N_{N/K}(\alpha-1)=(1+\iota)^{k_1+k_2}N_{N/K}(I)\).  Since \(2\parallel (\alpha-1)\), we have \(k_1+k_2=2\), so \(k_1=1\) and \(k_2=1\). If \(f(Q\vert(1+\iota))=2\), we have \(Q^2=(1+\iota)\). Suppose  \((\alpha-1)=Q^kI\) where \(Q\) does not divide \(I\). We have \(N_{N/K}(\alpha-1)=(1+\iota)^{2k}N_{N/K}(I)\) and we get \(k=1\).    
		
Since,	$\alpha -1$ is a uniformiser at all primes $Q$ in $N$ dividing
	$1+\iota$, it follows that \(\nu_{Q}(\alpha-1)=1\) and \(\alpha-1\) is a uniformiser for the unique prime ideal in \(\io{N,Q}\), the completion of \(N\) at \(Q\). This is because the discrete valuation on \(N_Q\), the completion of \(N\) at \(Q\) is the unique extension of the discrete valuation on \(N\) defined by the prime ideal \(Q\).   
	
	Let \(Q_M=P\cap \io{M}\) be a prime in \(\io{M}\). Let \(M_{Q_M}\) be the completion of \(M\) with respect to the discrete valuation defined by \(Q_M\).  Since \(N_Q/M_{Q_M}\) is totally ramified, it follows that \(\left\{1,\alpha-1\right\}\) generates \(\io{N_Q}\) over \(\io{M_{Q_M}}\). So, we have 
	\[
	G_i=\left\{\sigma\in G(N_Q/M_{Q_M})\vert \sigma(\alpha-1)-(\alpha-1)\in Q^{i+1}\right\}
	\] 
	Suppose \(1+\iota\) is inert in \(M\).  We have $\sigma^{2}(\alpha-1)-(\alpha -1)=-2\alpha\in Q^{4}$
	when $1+\iota$ is inert in $M$ and $Q$ is the unique prime in $N$
	dividing $1+\iota$. So $\sigma^{2}\in G_{3}(F/L)$ and $\sigma^{2}\not\in
	G_{4}(F/L)$. Using the formula for the different(cf. \cite{lf})
	$$ {v_Q}({{\cal D}\left(N_Q/M_{Q_M}\right)}) = \sum_{i=0}^{\infty} ({|G_{i}|-1})$$
	we get
	$\nu_Q({\cal D})=4$.   Therefore \[d\left(N_Q/M_{Q_M}\right)=N_{N_Q/M_{Q_M}}\left({\cal D}\right)=N_{N_Q/M_{Q_M}}\left(Q^4\right)=(1+\iota)^4\]
	Therefore \((1+\iota)^4\parallel d\left(N/M\right)\). Since \(d(N/K)=N_{N/M}(d(N/M))\left(d(M/K)\right)^{[N:M]}\) and \((1+\iota)\nmid d(M/K)\), it follows that \((1+\iota)^8 \parallel d(N/K)\).
	
	 If $1+\iota$ splits in $M$ and
	$Q$ is one of the two primes in $N$ dividing $1+\iota$,
	$Q^{4}\parallel 2\alpha$. From this, it follows that for any prime
	$Q$ in $N$ dividing $1+\iota$, we have $Q_M^4\parallel 
	d(N/M)$ where \(Q_M=Q\cap \io{M}\). If \(Q_M\) and \(Q_M'\) are the two primes in \(M\) above \((1+\iota)\),
	$\left(Q_MQ_M'\right)^4=(1+\iota)^{4}\parallel \disc(N/M)$. As before, \((1+\iota)^8\parallel d(N/K)\).
	
	 Therefore, from
	(\ref{main-theorem}), it follows that $\delta_{2}\delta_{3}=4$. Since
	$${\iota {\abs{gh}}^{2}+\alpha^{2}\over
		2gh},{\abs{gh^{2}}^{2}\iota\alpha+\alpha^{3}\over 2gh^{2}}\in\io{F},$$
	$\delta_{2}=\delta_{3}=2$.
	
	\noindent
	\textbf{Case 2(b)} Suppose $g$ is even. Let \(Q\) be a prime ideal in \(N\) that divides \(1+\iota\). Since \((1+\iota)\mid \alpha^4=fg^2h^3\), \(Q\mid (\alpha)\). Let \((\alpha)=Q^rI\) where \(Q\nmid I\). We have \(N_{N/M}(\alpha)=N_{N/M}\left(Q^r\right)N_{N/M}(I)=Q_M^rJ\) where \(Q_M=\io{M}\cap Q\), \(Q_M\nmid J\). Then, since \(f(Q\vert Q_M)=1\),  
	\begin{equation}v_{Q_{M}}(N_{F/M}(\alpha))=r\end{equation}
	where $Q_{M}$ is a prime in $M$ dividing $1+\iota$. On the other hand \(1+\iota\) is unramified in \(L\), so \(\nu_{Q_M}((1+\iota))=1\). Since \((1+\iota)^2\nmid g\), it follows that $\nu_{Q_M}(gh\sqrt{fh})=1$.  Therefore, \(\nu_{Q_M}(N_{N/M}(\alpha))=\nu_{Q_M}(gh\sqrt{fh})=1\).  So. \((\alpha)=QI\) and thereforee $\alpha$ is a 
	uniformiser at all primes in $F$ dividing $1+\iota$.
	
	We have $\sigma^{2}(\alpha)-\alpha = 2\alpha$. As in the previous case we can
	check that $(1+\iota)^{10}\parallel\disc(N/K)$ and
	$\delta_{2}\delta_{3}=(1+\iota)^{6}$ . Since
	$$ {gh+\alpha^{2}\over 2gh},{igh^{2}+\alpha^{3}\over 2gh^{2}}\in\io{F}$$
	\(2\mid \delta_2\), \(2\mid \delta_3\) and \(\delta_3\neq 4\) we must have $\delta_{2}=\delta_{3}=2(1+\iota)$.
	
	\noindent
	\textbf{Case 3} $N/K$ is totally ramified at $1+\iota$. Note that $N/K$ is
	totally ramified at $1+\iota$ if and only if $1+\iota$ ramifies in $M/K$.
		
Let $G_i$, $i\geq 0$, be the ramification groups of \(N/K\). Since \(1+\iota\) is totally ramified in $N/K$, these are the same as the ramification groups of $N'/K'$  where $K'$ is the completion of \(K\) at \(1+\iota\) and $N'$ is the completion of $N$ at the unique prime in $N$ that lies over $1+\iota$ in \(N\). 
	
We will use the results from \cite{wyman}. Let $b_{i}$ and $b^{i}$ denote
the $i^{th}$ lower and upper break numbers of the extension $M/K$. 

\b{Case 3(a)}/ $m\equiv \iota\pmod{2}$ and $g$ is \emp{odd}/.

In this case, $4\parallel\disc (M/K)$. From the formula for the different,
it follows that the break number for the extension for $M/K$ is $3$. So, from \ref{prop:7a} 
it follows that $b^{1} =b_{1}=3$ for the extension for $N/K$.  
Using   \ref{wyman}, it follows that $b^{2}=5$.  From \ref{eqn:10-a} it follows that 
$b_{2}= 7$ for the extension $N/K$. Using the formula for the different,
we get that $(1+\iota)^{16}\parallel \disc (N/K)$. From (\ref{main-equation}), it follows that
$\delta_{1}\delta_{2}\delta_{3}=1$. 
	
\noindent	\textbf{Case 3(b)} $fh\equiv \iota\pmod{2}$, $g$ is \emp{even}/.
	
	As before, $(1+\iota)^{16}\parallel \disc (N/K)$. So we get, from (\ref{main-equation}),
	\[
	\left(d_1d_2d_3\right)^2=(1+\iota)^6f^3g_1^6h^9
	\] where \(g_1\) is the odd part of \(g\). Therefore, 
	$\delta_{2}\delta_{3}=(1+\iota )^{3}$. Since $\alpha^{2}/gh\in\io{F}$ and
	$\delta_{2}\vert \delta_{3}$, the only possibility is $\delta_{2}=1+\iota$
	and $\delta_{3}= 2$.
	
	\noindent
	\textbf{Case 3(c)} $f$ or $h$ is divisible by $1+\iota$. 
	
	In either case $(1+\iota)^{5}\parallel\disc (M/K)$. As in the previous case, we
	have $b^{1}(M/K)=b^1(N/K)=4$, $b^{2}= 6$ and $b_{2}=8$. So using the
	formula for the different, we get $(1+\iota)^{19}\parallel  
	\disc (N/K)$. 
	
	When $f$ is even, $\delta_{2}\delta_{3}=1$. When $h$ is even,
	$\delta_{2}\delta_{3} =(1+\iota)^{3}$. As $\alpha^{2}/gh\in\io{F}$,
	$\delta_{2}=1+ \iota$ and $\delta_{3}= 2$.

	\noindent\textbf{Case 3(d)} $fh\equiv \pm 1+2\iota\pmod{4}$ and $g$ is odd.
	Since $2\parallel \disc (M/K)$ when $fh\equiv \pm 1+2\iota\pmod{4}$, the break 
	number is 1 for $M/K$  and so $b_{1}=1$ for $N/K.$ Using \ref{prop:8a}, we get that the break numbers are odd. Therefore, $b_{2}\geq 3$.
	$(1+\iota)^{8}\vert \disc (F/L)$. Thus, $\delta_{2}\delta_{3}\vert
	(1+\iota)^{4}$.
	
	When $m\equiv 1+2\iota\pmod{4}$, $\rat{1+(1+\iota)\alpha+\alpha^{2}}. {2}. \in\io{F}$.
	Therefore $\delta_{2}=2$, $\delta_{3}=2$.
	
	When $m\equiv 3+2\iota\pmod{4}$, 
	$$N_{N/K}(\alpha-1)=m-1\equiv 2(1+\iota)\pmod{4}$$
	So $\rat{(\alpha-1)^{3}}. {2}. $ is a uniformiser for the unique prime in $F$ dividing
	$1+\iota$. We have 
	$$\sigma^{2}\left(\rat{(\alpha-1)^{3}}. {2}.\right) - (\alpha-1)^{3}
	=\alpha(\alpha^{2}+3)$$
	and
	$N_{N/K}(\alpha^{2}+3)=(m-9)^{2}$. Since $m-9\equiv 2(1+\iota)\pmod{4}$, 
	$(1+\iota)^{6}\parallel N_{N/K}\alpha(\alpha^{2}+3)$ and so $\sigma^{2}\in G_{5}(N/K)$ and
	$\sigma^{2}\not\in G_{6}(N/K)$. So, in this case,  $b_{2}=5$ for $N/K.$  Thus,
	$(1+\iota)^{10}\vert\disc (N/K)$. Therefore,
	$\delta_{2}\delta_{3}\parallel (1+\iota)^{6}$. 
	So 
	$\delta_{2}\delta_{3}\vert (1+\iota)^{3}.$ 
	Since $\rat{{\abs{gh}}^{2}+
		\alpha^{2}}. {1+\iota}.$   
	is an  integer,
	$\delta_{2}=(1+\iota)$, $\delta_{3}=2.$
	
	\noindent
	\b{Case3(e)}/ Let $fh\equiv \pm 1+2\iota\pmod{4}$, $g$ be even.
	
	Since $fh\equiv\pm 1\pmod{2(1+i)}$, $f\equiv \pm h\pmod{2(1+\iota)}.$
	When $f\equiv h\pmod{2(1+\iota)}$, consider the element ${\cal X}=1+\rat{\alpha
		+\alpha^{3}/gh^{2}}. {2}.$ . We have 
	$$N_{N/K}({\cal X})= {16-(g(f-h))^{2}-16fgh\over 16}$$
	Since $g\equiv (1+\iota)\pmod{2}$, $g=a+\iota b$ with $a$ and $b$ odd. So
	$g^{2}\equiv\pm 2\iota\pmod{8}$. One checks easily that
	$(g(f-h))^{2}\equiv 16\pmod{32}$ and $16fgh\equiv 16(1+\iota)\pmod{32}$. 
	So ${\cal X}$ is a uniformiser. We have
	\begin{eqnarray*}
		\sigma^{2}({\cal X})-{\cal X}&=&\alpha+\alcghs\\
		\noalign{and}
		(1+\iota)^{8}\parallel N_{N/K}(\alpha+\alcghs)&=&g^{2}fh(f-h)^{2}.
	\end{eqnarray*}
	So
	$G_{7}\neq \{1\}$. Since $b_{1}$ is odd, so is $b_{2}$. So $b_{2}\neq 8$.
	Since $G_{9}(N/K)=1$, $b_{2}=7$. $(G_{i}(N/K)=\{1\}$ if $i > e/(p-1)$ for any
	extension of local fields $N/K$, where $p$ is the characteristic of the
	residue field and $e$ is the valuation of $p$ in $L$. Cf. \cite{lf},
	Exercise (2)c at the end of $\S$2 in Chapter 4.)
	So $(1+\iota)^{12}\vert \disc (N/K)$ and therefore
	$\delta_{2}\delta_{3}=(1+\iota)^{5}$. Since $\rat{gh+\alpha^{2}}. {gh}.$ and
	$\rat{\alpha+ \alpha^{3}/gh^{2}}. {2}.$ are in $\io{F}$, $\delta_{2}=2$,
	$\delta_{3}= 2(1+\iota)$.
	
	Similarly, when $f\equiv -h\pmod{2(1+\iota)}$,
	$1+\rat{\iota\alpha+\alpha^{3}/gh^{2} }. {2}.$ is an uniformiser. As
	before, it can be checked that $(1+\iota)^{12}\vert \disc (N/K)$. Since
	$\rat{gh+\alpha^{2}}. {(1+\iota)gh}.$ and
	$\rat{\iota\alpha+ \alpha^{3}/gh^{2}}. {2}.$ are in $\io{F}$, $\delta_{2}=2$,
	$\delta_{3}= 2(1+\iota)$.
\end{proof}

\section{Appendix}
In this appendix, we give the proof of main theorem on existence of normalised integral basis for the ring of integers of an extension field when the ring of integers for the base field is a PID.  The proof in \cite{mar} for the case where the base field is \Q\ goes through for this case also and no new ideas are needed.  We have reproduced adapted  proof here for  the sake of completeness.

We prove another lemma that will be useful in the proving the main theorem.
\begin{lemma}\label{key-lemma-1}
	Let  \(\theta\in \C\) have degree \(n\) over \(K\) and \(h_1(x)\), \(h_2(x)\in K[x]\) have degree less than \(n\) with \(h_1(\theta)=h_2(\theta)\). Then, \(h_1(x)=h_2(x)\).
\end{lemma}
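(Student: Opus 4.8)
The plan is to reduce the statement to the defining property of the minimal polynomial of \(\theta\) over \(K\). Set \(h(x) = h_1(x) - h_2(x) \in K[x]\). By hypothesis \(\deg h_1 < n\) and \(\deg h_2 < n\), so \(\deg h < n\) as well (or \(h = 0\)). Evaluating, \(h(\theta) = h_1(\theta) - h_2(\theta) = 0\), so \(\theta\) is a root of \(h\).

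The key step is then to invoke the fact that \(\theta\) has degree \(n\) over \(K\), i.e.\ the minimal polynomial \(m_\theta(x) \in K[x]\) of \(\theta\) has degree exactly \(n\), and any polynomial in \(K[x]\) vanishing at \(\theta\) is a \(K[x]\)-multiple of \(m_\theta(x)\). Since \(h(\theta) = 0\), we have \(m_\theta(x) \mid h(x)\) in \(K[x]\). If \(h\) were nonzero this would force \(\deg h \ge \deg m_\theta = n\), contradicting \(\deg h < n\). Hence \(h(x) = 0\), that is, \(h_1(x) = h_2(x)\).

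There is essentially no obstacle here: the argument is the standard uniqueness-of-polynomial-representation fact for a simple algebraic extension, and it uses only that \(K[x]\) is a domain (so that divisibility controls degrees) together with the definition of the degree of \(\theta\) over \(K\). The only point worth stating carefully is that \(\deg(h_1 - h_2) < n\) even allowing for cancellation of leading terms, which is immediate since subtraction cannot increase the degree. I would present the whole proof in three or four lines.
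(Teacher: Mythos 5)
Your proof is correct and amounts to the same argument as the paper's: the paper cites the linear independence of \(\{1,\theta,\ldots,\theta^{n-1}\}\) over \(K\), which is precisely the statement that no nonzero polynomial in \(K[x]\) of degree less than \(n\) vanishes at \(\theta\) --- the fact you derive via the minimal polynomial. Both are one-line reductions to the definition of the degree of \(\theta\) over \(K\).
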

\begin{proof}
	This follows immediately from the fact that the set  \(\{1,\theta,\ldots,\theta^{n-1}\}\) is linearly independet over \(K\)
\end{proof}
\begin{proof}[Proof of the  main theorem]
	Let \(\alpha\in S\) be such that \(L=K[\alpha]\).  Then \(\left\{1,\alpha,\alpha^2,\ldots,\alpha^{n-1}\right\}\) is a basis for \(L\) over \(K\). By, \ref{key-lemma}, we have 
	\[
	S\subseteq R\frac{1}{d}\oplus R\frac{\alpha}{d}\oplus\cdots\oplus R\frac{\alpha^{n-1}}{d}
	\] where
	\[d=\disc\left(1,\alpha,\ldots,\alpha^{n-1}\right)\]
	Let 
	\[
	F_i=R\frac{1}{d}\oplus R\frac{\alpha}{d}\oplus R\frac{\alpha^2}{d}\cdots \oplus R\frac{\alpha^{i-1}}{d}
	\]
	be the \(R\)-module generated by 
	\[
	\left\{\frac{1}{d},\frac{\alpha}{d},\frac{\alpha^2}{d},\ldots,\frac{\alpha^{i-1}}{d}\right\}
	\] and let \(S_i=F_i\cap S\). Each \(S_i\) is an \(R\)-module.  We have \(S_1=R\) and \(S_n=S\).  Let us see why. We have \(S_1\subset K\). Since elements in \(S_1\) are integers, \(R\) is integrally closed, and \(1\in S_1\) it follows that \(S_1=R\). Since \(S\subset F_n\) by \ref{key-lemma}, it follows that 
	\(S_n=S\).
	
	We define $f_k(x)\in R[x]$ and $d_k\in R$ such that for each $ i$ 
	\[
	\left\{1, \frac{f_1(\alpha)}{d_1},\ldots,\frac{f_{i-1}(\alpha)}{d_{i-1} } \right\}
	\] is a basis for $S_i$ and $d_{k}\mid d_{k+1}$ if $k\leq i-2$.  This is certainly true for $i=1$. 
	
	Suppose this is true for $m$, that is 
	\[
	S_{m}=\left\{1,\frac{f_1(\alpha)}{d_1},\ldots,\frac{f_{m-1}(\alpha)}{d_{m-1}}\right\}
	\]               
	is a basis for \(S_m\) for monic polynomials \(f_k(x)\) of degree \(k\) and \(d_k\in R\) such that \(d_k\mid d_{k+1}\) if \(k\leq m-2\).  We have to show that there is a monic polynomial \(f_m(x)\) of degree \(m\) and \(d_m\in R\) such that \(d_{m-1}\mid d_m\) and 
	\[
	\left\{1,\frac{f_1(\alpha)}{d_1},\ldots,\frac{f_m(\alpha)}{d_m}\right\}
	\] 
	is a basis for \(S_{m+1}\) over \(R\).
	
	Consider the map \(\pi\colon F_{m+1}\to M\) where  \(M\) is the \(R\)-module
	\[
	\left\{\left.
	r\frac{\alpha^m
	}{d}\right\vert r\in R
	\right\}
	\]
	given by projection on the \((m+1)^{\textrm{th}}\) component, that is
	\[
	\frac{a_0}{d}+a_1\frac{\alpha}{d}+\cdots+a_{m}\frac{\alpha^m}{d}\mapsto a_{m}\frac{\alpha^m}{d}
	\]
	Then, \(\pi\left(S_{m+1}\right)\) is a submodule of the free \(R\)-module \(M\) of rank one generated by \(\frac{\alpha^m}{d}\).  We have \(\pi\left(S_{m+1}\right)\neq 0\) since 
	\[
	\alpha^m=0\cdot \frac{1}{d}+0\cdot \frac{\alpha}{d}+\cdots+d\cdot \frac{\alpha^m}{d}\in S_{m+1}
	\] and \(\pi\left(\alpha^m\right)=d\,\frac{\alpha^m}{d}\neq 0\).  So, there is a \(\beta\in S_{m+1}\) such that \(\pi(\beta)\) generates \(\pi\left(S_{m+1}\right)\). 
	
	We will prove that 
	\[
	\left\{1,\frac{f_1(\alpha)}{d_1}, \cdots,\frac{f_{m-1}(\alpha)}{d_{m-1}},\beta\right\}
	\] is a basis for \(S_{m+1}\).
	
	Let \(\gamma\in S_{m+1}\).  Then
	\(\pi(\gamma)=s\pi(\beta)\). So, \(\pi(\gamma-s\beta)=0\) and  
	\(\gamma-s\beta\in S_m\).  So, by induction hypothesis 
	\[
	\gamma-s\beta=b_0+b_1\frac{f_1(\alpha)}{d_1}+\cdots+b_{m-1}\frac{f_{m-1}(\alpha)}{d_{m-1}}
	\]
	or
	\[
	\gamma=b_0+b_1\frac{f_1(\alpha)}{d_1}+\cdots+b_{m-1}\frac{f_{m-1}(\alpha)}{d_{m-1}}+s\beta
	\]Since \(\pi(\beta)\neq 0\), \(\beta\) is a polynomial in \(\alpha\) of degree \(m\) over \(K\). It follows that 
	\[
	\left\{1,\frac{f_1(\alpha)}{d_1}, \cdots,\frac{f_{m-1}(\alpha)}{d_{m-1}},\beta\right\}
	\] is a basis for \(S_{m+1}\). We have to prove that \(\beta\) is in the correct form.
	
	We have \(\frac{f_{m-1}(\alpha)}{d_{m-1}}\in S_m\) so, \(\frac{\alpha f_{m-1}(\alpha)}{d_{m-1}}\in S\). Also, since \(f_{m-1}(x)\) has degree \(m-1\), \(xf_{m-1}(x)\) has degree \(m\) and \(\frac{\alpha f_{m-1}(\alpha)}{d_{m-1}}\in S_{m+1}\).  So, 
	\[
	\frac{\alpha^m}{d_{m-1}}=\pi\left(\frac{\alpha f_{m-1}(\alpha)}{d_{m-1}}\right)\in \pi\left(S_{m+1}\right)
	\] So, \(\frac{\alpha^m}{d_{m-1}}=k\pi(\beta)\). Writing \(d_m=kd_{m-1}\), we have \(\pi(\beta)=\frac{\alpha^m}{d_m}\).   
	
	Since \(\beta\in S_{m+1}\), \(\beta=\sum_{i=0}^m\frac{u_i}{d}x^i\) with \(u_i\in R\).  From \(\pi(\beta)=\frac{\alpha^m}{d_m}\), it follows that \(u_m=\frac{d}{d_m}\) and \(\frac{d}{d_m}\in R\). Let
	\[
	f_m(x)=d_m\left(\frac{u_0}{d}+\frac{u_1}{d}x+\cdots+\frac{u_{m-1}}{d}x^{m-1}\right)+x^m
	\]
	Then \(\beta=\frac{f_m(\alpha)}{d_m}\). We need to show that \(f_{m}(x)\in R[x]\).    We have
	\(\frac{f_m(\alpha)}{d_{m-1}}=k\beta\in S\), so, 
	\[
	\frac{f_m(\alpha)-\alpha f_{m-1}(\alpha)}{d_{m-1}}=\gamma\in S
	\] Actually, \(\gamma\in S_m\). Let \[\gamma=u_0+\frac{u_1}{d_1}f_1(\alpha)+\cdots+\frac{u_{m-1}}{d_{m-1}}f_{m-1}(\alpha)=\frac{g(\alpha)}{d_{m-1}}\]
	where \(u_i\in R\) and 
	\[
	g(x)=d_{m-1}u_0+u_1\frac{d_{m-1}}{d_1}f_1(x)+u_2\frac{d_{m-1}}{d_2}f_2(x)+\cdots+u_{m-1}f_{m-1}(x)
	\]
	Since \(d_i\mid d_{m-1}\) if \(i\leq m-1\),  \(g(x)\in R[x]\) and \(\deg (g(x))<m\). We have \(g(\alpha)=f_{m}(\alpha)-\alpha f_{m-1}(\alpha)\).
	From \ref{key-lemma-1} it follows that \(g(x)=f_m(x)-xf_{m-1}(x)\). Since \(g(x)\), \(xf_{m-1}(x)\in R[x]\), it follows that \(f_m(x)\in R[x]\).
	
	We now show that \(d_m\) is uniquely determined up to a unit.  Let 
	\[
	I_m=\left\{x\in R\left\vert xS_{m+1}\subset R[\alpha]\right.\right\}
	\] We claim that \(I_m=\left(d_m\right)\).  We have \(d_m\in I_m\) because \(d_i\mid d_m\) for \(1\leq i\leq m-1\).  
	
	If \(\lambda\in I_m\), since \(\frac{\alpha^m}{d_m}\in S_{m+1}\), \(\lambda\frac{\alpha^m}{d_m}\in R[\alpha]\).  So, \(\frac{\lambda}{d_m}\alpha^m=h(\alpha)\), where \(h(x)\in R[x]\) and \(\deg(h(x))=m<n\).  So, \(h(x)=\frac{\lambda}{d_m}x^m\) and \(\frac{\lambda}{d_m}\in R\).
	This completes the proof of the main result in \ref{main-theorem}. 
	
	\textbf{Proof of \ref{basis3} of the \ref{main-theorem}}:  Since 
	\(\frac{g(\alpha)}{d_i}\in S\), we have 
	\[\frac{g(\alpha)}{d_i}=u_0+\sum_{i=1}^{n-1}u_i\frac{f_{i}\left(\alpha\right)}{d_i}\]Letting 
	\[
	h(x)=u_0+\sum_{i=1}^{n-1}u_i\frac{f_{i}\left(x\right)}{d_i}
	\]
	we have \(g(\alpha)=h(\alpha)\) and both \(g(x)\) and \(h(x)\) have degree less than \(n\).  Applying \ref{key-lemma-1}, it follows immediately that \(\frac{g(x)}{d_i}=h(x)\). Considering the coefficients of \(x^{n-1}\) both sides we conclude that \(u_{n-1}=0\).  Similarly, considering the coefficients of \(x^{n-2}\), \(\ldots\), \(x^{i+1}\) in succession, we get \(u_{n-2}=0\), \(u_{n-3}=0\), \(\ldots\), \(u_{i+1}=0\) and \(u_i=1\). So, the transition matrix from 
	\begin{equation}\label{ibasis1}
		\left\{1,\frac{f_1(\alpha)}{d_2}, \ldots, \frac{f_{i-1}(\alpha)}{d_{i-1}},\frac{g(\alpha)}{d_i},\frac{f_{i+1}(\alpha)}{d_{i+1}},\ldots, \frac{f_{n-1}(\alpha)}{d_{n-1}}\right\}
	\end{equation}  to 
	\begin{equation}\label{ibasis2}\left\{1,\frac{f_1(\alpha)}{d_1},\ldots,\frac{f_{n-1}(\alpha)}{d_{n-1}}\right\}
	\end{equation}
	has coefficients in \(R\) and is an upper triangular matrix with 1's along the  diagonal. Indeed all the columns of the matrix are the same as that of the same as the \(n\times n\) identity matrix except the i\textsuperscript{th} column which is 
	\[
	\begin{bmatrix}
		u_0\\
		u_1\\
		\vdots\\
		u_{i-1}\\
		1\\
		0\\
		\vdots\\
		0
	\end{bmatrix}
	\]So, the set in \ref{ibasis1} is also a basis for \(S\) over \(R\).
	
	\textbf{Proof of property \ref{basis1}}: Suppose \(\frac{g(\alpha)}{q}\in 
	S\) for a monic polynomial \(g(x)\in R[x]\) of degree \(i\), \(1\leq i\leq n-1\).  Writing \(\frac{g(\alpha)}q{}\) in terms of the integral basis in \ref{NIM}, we get 
	\[
	\frac{g(\alpha)}{q}=u_0+\sum_{i=1}^{n-1}u_i\frac{f_i(\alpha)}{d_i}, \ u_i\in R
	\]
	Arguing as we did in the proof of Property \ref{basis3}, we get 
	\(u_{n-1}\), \(u_{n-2}\), \(\ldots\), \(u_{i+1}\) are all zero and \(\frac{1}{q}=\frac{u_i}{d_i}\) or \(d_i=qu_i\) and \(q\mid d_i\).
	
	\textbf{Proof of Property \ref{basis1}}  We have \(\frac{f_i(\alpha)f_j(\alpha)}{d_id_j}\in S\) and \(g(x)=f_i(x)f_j(x)\) is a monic polynomial of degree \(i+j\) over \(R\). The result now follows from Property \ref{basis1}.
	
	\textbf{Proof of Property \ref{basis2}}:\ref{NIM} is an integral basis for \(S\)
	over \(R\) it follows that
	\[
	\disc(S/R)=\disc\left(\left\{1,\frac{f_1(\alpha)}{(d_1},\frac{f_2(\alpha)}{d_2},\ldots,\frac{f_{n-1}(\alpha)}{d_{n-1}}\right\}\right)
	\] The matrix over \(R\) that maps \(\left\{1,\alpha,\ldots,\alpha^{n-1}\right\}\) to the basis in \ref{NIM} is an upper triangular matrix with entries 1, \(\frac{1}{d_1}\), \(\frac{1}{d_2}\), \(\ldots\),\(\frac{1}{d_{n-1}}\) along the diagnal.  So, the determinant of the matrix is \(\frac{1}{d_1d_2\cdots d_{n-1}}\). Therefore,
	\[
	\disc(S/R)=\frac{1}{\left(d_1d_2\cdots d_{n-1}\right)^2}\disc(\alpha)
	\] which is the required result.
	
	\textbf{Proof of Property \ref{basis4}}: From Property \ref{basis1}, it follows by induction that \(d_1^i\mid d_i\) for \(1\leq i\leq n-1\).  So, \(d_1^{\frac{n(n-1)}{2}}\) divides \(d_1d_2\cdots d_{n-1}\).  From Property \ref{basis2} it follows that \(d_1^{n(n-1)}\) divides \(\disc(\alpha)\).
\end{proof}
\noindent{\large \bf{Acknowledgement}}:  The authors would like to thank 
Prof. R. Balasubramanian for many useful discussions.
This work was done when the second author was visiting 
The Institute of Mathematical Sciences, Madras. He thanks the
institute for its hospitality.


%
%

\bibliographystyle{sn-aps}      
\bibliography{qua4-rev-springer.bib}   

%
%

\end{document}